\DeclarePairedDelimiter{\abs}{\lvert}{\rvert}
\DeclarePairedDelimiter{\norm}{\lVert}{\rVert}
\DeclarePairedDelimiter{\prt}{(}{)}
\newcommand{\curlyE}{\mathcal{E}}
\newcommand{\curlyI}{\mathcal{I}}
\newcommand \commentout[1] {}
\DeclareMathOperator*{\supp}{\operatorname{supp}}
\newcommand{\partialt}[1]{\dfrac{\partial#1}{\partial t}}
\DeclareMathAlphabet{\mathup}{OT1}{\familydefault}{m}{n}
\newcommand{\dx}[1]{\mathop{}\!\mathup{d} #1}
\theoremstyle{plain}
\newtheorem{theorem}{Theorem}[section]
\newtheorem{lemma}[theorem]{Lemma}
\newtheorem{proposition}[theorem]{Proposition}
\newtheorem{corollary}[theorem]{Corollary}
\theoremstyle{remark}
\newtheorem{remark}[theorem]{\bf Remark}
\newcommand{\ie}{\emph{i.e.}}
\newcommand{\ddt}{\frac{\dx{}}{\dx{t}}}
\newcommand{\grad}{\nabla}
\renewcommand{\div}{\nabla\cdot}
\newcommand{\R}{\mathbb{R}}
\newcommand{\T}{\mathbb{T}}
\begin{document}

\title{Uniform regularity estimates for nonlinear diffusion-advection equations in the hard-congestion limit}
\author{Noemi David\thanks{Universite Claude Bernard Lyon 1, CNRS, Ecole Centrale de Lyon, INSA Lyon, Université Jean Monnet, 
Institut Camille Jordan, UMR5208, 43 bd du 11 novembre 1918, 69622 Villeurbanne Cedex,
France. (ndavid@math.univ-lyon1.fr; santambrogio@math.univ-lyon1.fr)}\and{Filippo Santambrogio\footnotemark[1]}\and Markus Schmidtchen\thanks{Institute of Scientific Computing, Technische Universit\"at Dresden Zellescher Weg 12-14, 01069 Dresden, Germany. (markus.schmidtchen@tu-dresden.de).}}

\maketitle
\begin{abstract}
\noindent
We present regularity results for nonlinear drift-diffusion equations of porous medium type (together with their incompressible limit). We relax the assumptions imposed on the drift term with respect to previous results and additionally study the effect of linear diffusion on our regularity result (a scenario of particular interest in the incompressible case, for it represents the motion of particles driven by a Brownian motion subject to a density constraint). Specifically, this work concerns the $L^4$-summability of the pressure gradient in porous medium flows with drifts that is stable with respect to the exponent of the nonlinearity, and $L^2$-estimates on the pressure Hessian (in particular, in the incompressible case with linear diffusion we prove that the pressure is the positive part of an $H^2$-function).  
\end{abstract}

\vskip .4cm
\begin{flushleft}
    \noindent{\makebox[1in]\hrulefill}
\end{flushleft}
	2010 \textit{Mathematics Subject Classification.} 35B45; 35K57; 35K65; 35Q92; 76N10;  76T99; 
	\newline\textit{Keywords and phrases.}  \\[-2.em]
\begin{flushright}
    \noindent{\makebox[1in]\hrulefill}
\end{flushright}


\section{Introduction} 
The paper is concerned with conservation laws for a density, $n=n(x,t)$, at location $x\in\R^d$ at time $t>0$,  of the form
\begin{subequations}
\begin{equation}
    \label{eq: main-active-motion}
    \partialt{n}=\nabla \cdot (n (\nabla p + \nabla V)) + \nu \Delta n,
\end{equation} 
where the first term on the right-hand side models the density's response to the pressure, $p$, and the penultimate term its response to an external drift with potential $V= V(x,t)$. The final term describes random motion with diffusivity $\nu\geq 0$. Here, the pressure is directly related to the density through the constitutive law
\begin{equation*}
    p= P(n):=n^\gamma,
\end{equation*}
where $\gamma>1$ models the stiffness of the pressure law. A quick computation shows that $p$ satisfies
\begin{align}
    \label{eq: pressure-active-motion}
    \partialt{p}=\gamma p \Delta (p + V) +\nabla p \cdot\nabla (p+V)  -\nu \frac{\gamma}{\gamma-1}\frac{|\nabla p|^2}{p} + \nu \Delta p.
\end{align}
\end{subequations}
We are interested in two distinct scenarios -- the case $\nu > 0$, corresponding to a model with active motion, and the case $\nu=0$, corresponding to a purely mechanical model, such that the density evolves according to 
\begin{subequations}
\begin{equation}
    \label{eq: main}
    \partialt{n}=\nabla \cdot (n \nabla (p + V)).
\end{equation}
In this case, the pressure satisfies the equation
\begin{align}
    \label{eq: pressure}
    \partialt{p}=\gamma p \Delta (p +V) +\nabla p \cdot\nabla(p+ V).
\end{align}
\end{subequations}
It is well known that, when $\gamma\to\infty$, these equations generate free boundary problems of Hele-Shaw type. This asymptotic is known as hard-congestion or incompressible limit and has attracted a lot of attention in the last decade, especially for its applications to crowd behaviour and 
living tissue modelling.

The aim of this paper is to separately study the cases $\nu>0$ and $\nu =0$ and establish integral estimates on the first and second-order derivatives of the pressure $p$ which are uniform with respect to the stiffness parameter $\gamma$. This will include estimates on $|\nabla p|^4$ and $|D^2p|^2$. Specifically, great attention will be dedicated to deriving such uniform bounds under relaxed assumptions on the potential, $V:\R^d\times\R\to\R$, compared to the those already present in the literature.

\subsection{Literature review and motivations from applied sciences} 

\paragraph{Motivations -- modelling of crowd behaviour.}
A crucial aspect of modelling pedestrian motion are congestion effects since it is clear that, at any point in space and time, the density of individuals cannot surpass a maximum threshold. This translates directly into imposing an upper bound on the density, usually $n\leq 1$, which is referred to as \textit{hard congestion} effect \cite{MBSV2011, MRS2010}. Clearly, solutions to the equations presented above, \eqref{eq: main-active-motion} and \eqref{eq: main}, do not guarantee the preservation of this constraint. Although the pressure acts to prevent congestion through Darcy's law, the (compressible) constitutive law $p=n^\gamma$ does not necessarily enforce $n\leq 1$ at all times. There is however a well-known link between compressible (or \textit{soft congestion}) models such as \eqref{eq: main} and their hard-congestion counterparts. In the so-called \textit{incompressible limit}, $\gamma\to \infty$, the limit pressure satisfies an inclusion, determined by a monotone multi-function $P$:
\begin{equation}
    p_\infty \in P(n_\infty), \quad\mbox{ where } P(n)=\begin{dcases}
        \{0\}, &\text{ for } n<1,\\
        [0,+\infty)  &\text{ for } n=1,\\
       \emptyset  &\text{ for } n>1.
    \end{dcases}
\end{equation}
This graph relation prevents the limit density from exceeding the value $1$ and can also be rewritten as follows
\begin{equation}\label{eq: constraint}
       p_\infty(1-n_\infty)=0, \;\quad \text{and} \quad 0\leq n_\infty\leq 1.
\end{equation}
Moreover, the limit density and pressure of solutions to Eq.~\eqref{eq: main} are expected, at least at a formal level, to satisfy
\begin{equation}
\begin{aligned}\label{eq: HS with drift}
    \partialt{n_\infty} =\Delta  p_\infty + \nabla \cdot (n_\infty \nabla V).
    \end{aligned}
\end{equation}
The interpretation of problem \eqref{eq: constraint}-\eqref{eq: HS with drift} as a macroscopic model of crowd motion was first suggested by one of the authors together with Maury and Roudneff-Chupin in \cite{MRS2010}. They consider the conservation law
\begin{equation}\label{eq: MRS}
    \partialt n + \nabla\cdot(n \boldsymbol{u})=0,
\end{equation}
where the velocity field is defined as $\boldsymbol{u}= P_{C_n} \boldsymbol{U}$, where $\boldsymbol{U}$ represents the spontaneous velocity that individuals would have in the absence of others, and $P_{C_n}$ denotes a projection operator onto the space of admissible velocities, namely the space of velocities that preserve the constraint $n\leq 1$. When $\boldsymbol{U}$ is of the form $\boldsymbol{U}=-\nabla V$, the authors show that \eqref{eq: MRS} corresponds to the 2-Wasserstein gradient flow associated with the functional
\begin{equation}\label{energy infty}
    \curlyE_\infty(n)=\begin{dcases}
        \int_{\R^d} V n\dx x &\text{ for } n\leq 1,\\
        +\infty &\text{ otherwise.}
    \end{dcases}
\end{equation}
The velocity field that preserves the density constraint is, indeed, of the form $\boldsymbol{u}=-\nabla V -\nabla p$, where $p$ is an $L_t^2H_x^1$ function that satisfies \eqref{eq: constraint}. Formally, the gradient flow associated to \eqref{energy infty} can be approximated by a porous medium-type equation as $\gamma\to \infty$, namely the gradient flow corresponding to the following energy functional
\begin{equation}
    \label{eq: energy}
    \curlyE_\gamma(n)= \int_{\R^d} n V \dx x + \frac{1}{\gamma+1}\int_{\R^d} n^{\gamma+1} \dx x.
\end{equation}
The rigorous derivation of this limit was first proven in \cite{AKY14}, where the authors prove that the solution to \eqref{eq: main} converges to the gradient flow associated to $\curlyE_\infty$ in the 2-Wasserstein distance. Moreover, they compute the first explicit rate of convergence which was later improved in the $H^{-1}$ norm \cite{DDP2021}. 

Models including sources and sinks in addition to convective effects have also been approached, \cite{DS21, KPW19}, and will be discussed later on in the paper.
Uniqueness results for problem \eqref{eq: constraint}-\eqref{eq: HS with drift} can be found in \cite{Igb2021, DM2016, DS21}.

\smallskip

\paragraph{Motivations -- tissue growth models.}
Besides crowd motion, the stiff-pressure limit of porous medium equations or systems has been widely applied to biologically motivated models, specifically, to living tissue. Indeed, Darcy's law has been successfully used to encode the cells' motion down pressure gradients. 
Unlike in the description of crowd motion, these models involve reaction terms which are usually pressure-related. 
The incompressible limit leads to Hele-Shaw-type problems where proliferation terms are responsible for the appearance of positive-pressure regions which drive the motion of the boundary. 
The first result on the incompressible limit for tumour growth models was proposed in \cite{PQV} by Perthame, Quir\'os, and Vazquez in 2014. The authors consider an equation of the form \eqref{eq: main} without accounting for convective effects, including reactions of the form $n G(p)$, where the growth rate, $G$, is a decreasing function of the pressure.
In order to study the limit $\gamma\to \infty$, the authors establish uniform bounds, specifically $BV$-bounds for both $n$ and $p$ and $\nabla p\in L^2_{x,t}$. Moreover, they prove the lower bound $\Delta p + G(p) \gtrsim -1/\gamma t$. This control is a variation of the fundamental estimate of the porous medium equation, also known as the celebrated Aronson-B\'enilan estimate \cite{AB}, and implies that $\Delta p$ is uniformly controlled as a bounded measure which is sharp since the gradient of the pressure of compactly supported solutions exhibits jumps at the free boundary.
From the seminal work \cite{PQV} research has branched out to several model extensions involving, for instance, nutrient concentrations \cite{DP21}, Brinkman's law \cite{DeSc, DEBIEC2020}, and singular pressure laws \cite{hecht2017incompressible, DHV}. 
In particular, let us briefly discuss a model including random motion on the macroscopic scale addressed in \cite{PQTV14} which is pertinent to our work. The authors study the same tissue growth model as the one in \cite{PQV} including also active motion
\begin{equation}\label{eq: pqtv}
    \partialt{n} = \nabla\cdot(n \nabla p) + \nu \Delta n + nG(p).
\end{equation}
They establish the incompressible limit as $\gamma \to \infty$ obtaining a Hele-Shaw free boundary problem in the same fashion as done in \cite{PQV}. However, the inclusion of active motion introduces crucial differences between the two models. In particular, the density $n(x,t)$ possesses better regularity properties coming from the uniform parabolicity of the equation. 
Indeed, the authors show that not only $\nabla p \in L^2$, but also $\nabla n \in L^2$. This is true uniformly in $\gamma$ -- hence also for the limits $p_\infty$ and $n_\infty$. Let us note that both the density and the pressure gradients exhibit jumps at the free boundary. However, as observed by the authors, in the sum $p_\infty + \nu n_\infty$ these singularities cancel out, and the sum actually belongs to $H^2$. It is furthermore shown that both $n_\infty$ and $p_\infty$ are continuous in space for almost every time $t>0$, which depicts a picture radically different from the case $\nu =0$. For the latter, it is known that characteristic functions of bounded moving domains are solutions of the density equation. 
It is part of the objectives of this paper to show that these properties also hold true when including a convective term into the equation, under mild assumptions on the potential $V(x,t)$.

The stiff-pressure limit was also studied for tumour growth models including local and non-local drift terms. In \cite{KPW19} the authors consider a convective-reaction-porous medium equation where the external drift  $\boldsymbol{\mathrm{b}}(x,t)$ is assumed to be $C^1_tC^3_x$ and the reaction is linear, namely $n f(x,t)$. Assuming $f- \nabla\cdot \boldsymbol{\mathrm{b}}>0$ the authors study the incompressible limit and prove convergence towards a free boundary problem using a viscosity solution approach.

The case $\boldsymbol{\mathrm{b}}=-\nabla V$ with nonlinear pressure-dependent reaction, $nG(p)$, was addressed by two of the authors in \cite{DS21} in a distributional solutions framework. Here, under substantially relaxed assumptions on the external drift, the authors establish the so-called \textit{complementarity relation} (\ie, the equation satisfied by $p_\infty$) by means of an $L^3$-Aronson-B\'enilan-type estimate and a uniform bound on $\nabla p$ in $L^4.$ 

The incompressible limit $\gamma\to\infty$ has also been analysed when the porous medium term is complemented by nonlocal drift effects, $\nabla W\star n$, rather than local, $\nabla V$, as for instance in \cite{CKY18, perthamePKS} for Newtonian potentials. We refer the reader to \cite{CT2020} for a wider class of interaction potentials, see also \cite{CG2021, CCY2019} and references therein.

\smallskip
\noindent

\paragraph{The gradient estimate in $\boldsymbol{L^4}$.} In the study of porous medium models such as \eqref{eq: main} a question that has attracted major attention is that of establishing regularity estimates on the pressure gradient $\nabla p$. The H\"older and Lipschitz regularity theory of the solution to the standard porous medium is well-established -- we refer the reader to the monograph by V\'azquez \cite{Vaz07}. However, obtaining refined estimates that are uniform in $\gamma$ has been addressed only quite recently. To the authors' best knowledge, the first such result in the context of nonlinear diffusion equations applied to tissue growth models is due to Mellet, Perthame, and Quir\'os \cite{MePeQu}. The authors address the same model studied in \cite{PQV}, namely Eq.~\eqref{eq: pqtv} with $\nu=0$. For this model, the authors prove that $\nabla p$ belongs to $L^4(0,T;L^4(\R^d))$ uniformly with respect to $\gamma$. Such a bound is obtained by testing the equation on the pressure by $-\Delta p + G(p)$ and exploiting the modified version of the Aronson-B\'enilan estimate obtained in \cite{PQV}. Later, in \cite{DP21}, the authors show the uniform $L^4$-boundedness of the pressure gradient for a system where the density equation is coupled with an equation for a nutrient concentration. Here, the nature of the coupling makes the Aronson-B\'enilan estimate in \cite{PQV} fail calling for a more general argument independent of such a bound. Moreover, in this work, the authors show that such a bound is sharp, in that there exists an example -- the so-called focusing or Graveleau solution \cite{AG}  --  for which $L^p$-norms blow up as $\gamma\to \infty$ whenever $p>4$. This property was also numerically investigated in \cite{davidruan2021}. The case involving convective effects has been considered in \cite{DS21} where a uniform $L^4$-bound is obtained on $\nabla p$ under the suitable assumption on the potential $V(x,t)$.  
Let us also mention that nowadays such an $L^4$-control is also known for nonlinear cross-diffusion systems and structured models, see \cite{Jac21, Dav23}. Similar estimates have also been established for fluid models \cite{AB20}.

\subsection{Our contribution and summary of the strategy}

\paragraph{Goals and motivations of the paper.}
As detailed in the introduction, the first uniform $L^4$-estimate on the pressure gradient for porous medium equations including convective effects was recently obtained by two of the authors in \cite{DS21}. This result was achieved under the following assumptions imposed on the potential $V(x,t)$ 
\begin{alignat}{2}
\label{assum: BV}
 \nabla \prt*{\partial_t V} &\in L_{\mathrm{loc}}^1((0,\infty); L_{\mathrm{loc}}^\infty(\R^d)), \qquad 
        \Delta \prt*{\partial_t V} &&\in L_{\mathrm{loc}}^1((0,\infty); L_{\mathrm{loc}}^1(\R^d)),\\[0.8em]
  \label{assum: D2V infty}       D^2 V &\in  L_{\mathrm{loc}}^{\infty}(0,T;L_{\mathrm{loc}}^\infty(\R^d)), 
    \qquad\qquad\quad     \grad V &&\in L_{\mathrm{loc}}^{\infty}(0,T;L_{\mathrm{loc}}^\infty(\R^d)).
\end{alignat}
The aim of this paper is to further relax these conditions and to furnish an analogous result for model including active motion, Eq.~\eqref{eq: main-active-motion}.

The relaxation of the regularity that needs to be imposed on $V$ is not only interesting in its own right and for its substantial application to the incompressible limit but it also represents a first promising step towards the understanding of more involved systems of equations. In particular, in \cite{San12} one of the authors proposed a mean-field game where a continuity equation on the density is coupled with a Hamilton-Jacobi equation.
This system is the first example of a non-variational mean-field game that takes into account the strong density constraint, $n\leq 1$, such as the one obtained in the incompressible limit of porous medium-type equations. Due to the nonvariational formulation of the problem, the existence of solutions for this system remains uncharted territory to this day. The presence of active motion, which was not considered in \cite{San12}, could mitigate the difficulties of proving the existence of solutions thanks to the additional regularity provided by the parabolic regularisation. The system then reads
\begin{equation}\label{MFG} 
 \begin{dcases}
    \partialt n -\Delta n + \nabla \cdot (n(\nabla \varphi-\nabla p))=0,
 \\[0.5em]
    \partialt \varphi + \Delta \varphi + \frac{|\nabla \varphi|^2}{2} - \nabla \varphi \cdot \nabla p=0,\\[0.5em]
    p\ge 0, \ p(1-n)=0. 
\end{dcases}
\end{equation}
The value function, $\varphi$, the macroscopic density of players, $n$, and the pressure, $p$, represent the three unknowns of the system, which is closed with the saturation condition on $n$ and $p$. The backward-forward structure of the system is the key feature of mean-field games -- the forward equation on the density describes the movement of the crowd, while the backward equation on the adjoint variable $\varphi$ represents the backward reasoning strategy of the players.
Even in the presence of diffusion, the question of proving the existence of weak solutions remains open. One of the main difficulties is the nonlinear term $\nabla\varphi\cdot\nabla p$.  It is easy to obtain $L^2$ bounds on each of the gradients separately, but this only provides weak convergence while at least one of the factors should be proven to converge strongly. The stronger bounds on the pressure derivatives established in this paper could exactly be used to prove strong compactness in $L^2$ of $\nabla p$. 

\paragraph{Summary of the strategy.}
Our analysis relies on exploiting the evolution equation satisfied by the pressure for any fixed $\gamma>0$.
In the case $\nu =0$, the pressure equation has been widely used in the literature to establish fundamental regularity properties of solutions. To obtain the uniform $L^4$-estimate on the pressure gradient we will compute the time evolution of $|\nabla p|^2$ using Eq.~\eqref{eq: pressure}, following the technique in \cite{MePeQu, DP21, DS21}. 
This will allow us to obtain a uniform estimate on the $L^2$ norm of $p D^2p$ which, together with a $L^\infty$-bound of $p$, controls the $L^4$ norms of the gradient through a simple functional inequality. Let us remark that it would be futile to try to establish an $L^2$-estimate on $D^2p$ not weighted by $p$ since the solution's gradient exhibits jumps on the free boundary $\partial\{p>0\}$.

In order to relax the assumptions on $V$ we will, however, employ a different treatment of the drift terms with respect to \cite{DS21}. In particular, our results do not rely on the uniform $BV$-bounds on the pressure established in \cite{DS21}. Therefore, we will no longer need assumption~\eqref{assum: BV} which was used to infer $\partial_t p\in L^1(0,T;L^1(\R^d)$. Moreover, we are able to improve the estimate in such a way that the $L^\infty$-control on the second derivatives of $V$, assumption~\eqref{assum: D2V infty}, can be replaced by an $L^2$-control.

In the case $\nu >0$, working with the pressure equation does not seem to be as effective as in the degenerate case. Indeed, it is unclear how to treat the additional term appearing in \eqref{eq: pressure-active-motion} which is proportional to $|\nabla \sqrt{p}|^2$. For this reason, as done in \cite{PQTV14}, rather than dealing with $p=n^\gamma$ we focus on the following quantity 
\begin{equation}
	\label{eq:defn-Sigma}
  \Sigma(x,t)= \sigma(n): = \frac{\gamma}{\gamma+1} n^{\gamma+1} + \nu n.
\end{equation} 
Computing the time evolution of $|\nabla \Sigma|^2$, we will show that $\Sigma \in L^2(0,T; H^2(\R^d))$ uniformly in $\gamma$ also in the presence of drifts. Moreover, we will show how such an estimate yields a control of $\Sigma$ in $L^4(0,T; W^{1,4}(\R^d))$, uniformly in $\gamma$.

Finally, we also show that the bounds on the Hessian of $\Sigma_\infty$ hold under even milder assumptions on $V$. This is possible since the $L^\infty$-bound on $p_\infty$ actually depends only on $\|V\|_\infty$, while the uniform (in $\gamma$) bound on the pressure requires stronger controls of the potential's derivatives.

\subsection{Outline of the rest of the paper}
The rest of the paper is organised as follows: in Section~\ref{sec: main results} we present the assumptions and main results, namely the uniform boundedness of the pressure gradient in $L^4$ for both equations~\eqref{eq: pressure} and \eqref{eq: pressure-active-motion}. Section~\ref{sec: preliminaries} is devoted to presenting some technical identities and inequalities that will be used throughout the paper. In Section~\ref{sec: pme drift}, we prove the main result for the equation without active motion, Theorem~\ref{thm: L4 no diff}, while the case $\nu>0$, Theorem~\ref{thm: L4 diff}, is treated in Section~\ref{sec: active motion}.

\section{Assumptions and main results}\label{sec: main results}
We now state the assumptions and main results of the paper for the cases $\nu=0$ and $\nu>0$, respectively.

\subsection{The advection-porous medium equation.} 
For $\nu=0$, we make the following assumptions on the initial data, $n_0(x)$, and its associated pressure $p_0(x)=(n_0(x))^\gamma$, 
\begin{equation}\label{eq: assump id}
\begin{split}   0\leq n_0, p_0 &\in L^1(\R^d)\cap L^\infty(\R^d), \quad p_0\in H^1(\R^d).\\[0.3em]
\supp(n_0)&\subset K,
\end{split}
\end{equation}
where $K \subset \R^d$ is a compact subset of $\R^d$.
We impose the following conditions on the potential $V(x,t)$:
\begin{equation}\label{eq: assum V}
\begin{aligned}
    V&\in L_{\mathrm{loc}}^\infty(0,T;L^\infty(\R^d))\cap L_{\mathrm{loc}}^2(0,\infty;H^2(\R^d)),\\[0.5em]
    \partial_t V &\in L^1_{\mathrm{loc}}(0,\infty; L^\infty(\R^d)) \cap  L_{\mathrm{loc}}^{4/3}(0,\infty, W^{1,4/3}(\R^d)).
  \end{aligned}
\end{equation} 
The $L_t^1 L_x^\infty$-control on $\partial_t V$ is needed in order to prove the uniform boundedness of the pressure through a maximum principle argument. It is immediately clear that the assumptions on $\nabla \partial_t V$ are much weaker than the ones in \eqref{assum: D2V infty}. In fact, as mentioned in the previous section, we no longer need $BV$-control in time of the pressure which required the higher regularity on $V$. However, some integrability of this term is still necessary to treat the term $\int \partial_t p \Delta V$ which appears in the time evolution of $|\nabla p|^2$.

\begin{theorem}[$L^4$-estimate on the gradient for $\nu =0$.]\label{thm: L4 no diff}
Let $(n_0, p_0)$, $V=V(x,t)$ satisfy assumptions~\eqref{eq: assump id}, \eqref{eq: assum V}. For all $T>0$, there exists a positive constant $C$, independent of $\gamma$, such that the solution, $p(x,t)$, of Eq.~\eqref{eq: pressure} with initial data $p_0$, satisfies
\begin{equation*}
      \int_0^T\!\!\int_{\R^d} p |D^2 p|^2 \dx x \dx t +      \int_0^T\!\!\int_{\R^d} |\nabla p|^4 \dx{x}\dx{t} \leq C.
\end{equation*}
\end{theorem}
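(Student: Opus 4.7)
The strategy is to obtain a uniform (in $\gamma$) control on $\int_0^T\!\int p |D^2 p|^2 \dx{x}\dx{t}$ by computing the time evolution of $\int |\nabla p|^2$, and then to deduce the $L^4$-bound on $\nabla p$ from the functional inequality
\[
\int_{\R^d} |\nabla u|^4 \dx{x} \;\le\; C\,\|u\|_\infty \int_{\R^d} u\,|D^2 u|^2 \dx{x},
\]
valid for any smooth nonnegative $u$ decaying at infinity, which is proved by writing $\int |\nabla u|^4 = -\!\int u\,\nabla\cdot(|\nabla u|^2\nabla u)\dx{x}$, applying Cauchy--Schwarz to the two resulting terms, and absorbing $(\int|\nabla u|^4)^{1/2}$ on the left.

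A preliminary $L^\infty$-bound on $p$, uniform in $\gamma$, follows from a maximum principle applied to $q:=p+V$: since $\partial_t q = \gamma p\,\Delta q + \nabla p\cdot\nabla q + \partial_t V$, and at any spatial maximum of $q$ one has $\nabla q=0$ and $\Delta q\le 0$, it follows that $\partial_t q \le \|\partial_t V(\cdot,t)\|_\infty$ at such maxima, hence $\|p(\cdot,t)\|_\infty\le C$ in view of the hypothesis $\partial_t V \in L^1_{\mathrm{loc}}(0,\infty;L^\infty(\R^d))$. Multiplying the pressure equation \eqref{eq: pressure} by $-\Delta p$ and integrating by parts yields
\[
\frac{1}{2}\frac{d}{dt}\!\int|\nabla p|^2 \;=\; -\gamma\!\int p(\Delta p)^2 \,-\!\int |\nabla p|^2\Delta p \,-\gamma\!\int p\,\Delta V\,\Delta p \,-\!\int (\nabla p\cdot\nabla V)\Delta p,
\]
and the weighted Bochner-type identity $\int p(\Delta p)^2=\int p|D^2 p|^2-\tfrac{3}{2}\int|\nabla p|^2\Delta p$ (derived by two further integrations by parts on $\int p\,\Delta p\,\Delta p$) converts the leading dissipation into $\gamma\int p|D^2 p|^2$ plus an $O(\gamma)$ correction proportional to $\int|\nabla p|^2\Delta p$.

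The main obstacle is to absorb the remaining lower-order and drift contributions into the good dissipation $\gamma\int p|D^2 p|^2$ with constants independent of $\gamma$. The ``bad'' first-order term $(\tfrac{3\gamma}{2}-1)\int|\nabla p|^2\Delta p$ is rewritten via integration by parts as $-2\int(D^2 p\,\nabla p)\cdot\nabla p$ and split as $(\sqrt p\,|D^2 p|)\cdot(|\nabla p|^2/\sqrt p)$; a weighted Cauchy--Schwarz and Young inequality then absorb one factor into $\gamma\int p|D^2 p|^2$, at the cost of a controllable $\int|\nabla p|^4/p$ contribution, which can be handled using the $L^\infty$-bound on $p$ together with the basic $L^2$-energy on $\nabla p$. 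The drift terms $\gamma\!\int p\Delta V\,\Delta p$ and $\int(\nabla p\cdot\nabla V)\Delta p$ are handled similarly: one transfers derivatives off $\Delta p$ via integration by parts so that only $\sqrt p\,|D^2 p|$ pairs with the second-order data of $V$ through the hypothesis $V\in L^2_{\mathrm{loc}}(0,T;H^2(\R^d))$, while the $\partial_t V$-terms arising from any time-integration by parts are controlled by Hölder and the hypothesis $\partial_t V\in L^{4/3}_{\mathrm{loc}}(0,\infty;W^{1,4/3}(\R^d))$. After integrating in $t$ over $[0,T]$ and applying a Grönwall argument, this yields $\int_0^T\!\int p|D^2 p|^2 \le C$ uniformly in $\gamma$, and the $L^4$-bound on $\nabla p$ then follows from the functional inequality stated at the outset together with the uniform $L^\infty$-bound on $p$.
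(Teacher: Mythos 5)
Your overall scaffolding — a uniform $L^\infty$ bound on $p$ via the maximum principle for $p+V$, the Bochner-type identity \eqref{eq:identity-equality}, and the closing functional inequality $\int|\nabla p|^4 \lesssim \|p\|_\infty\int p\,|D^2p|^2$ — matches the paper. But the central step, testing the pressure equation against $-\Delta p$, has a fatal $\gamma$-dependence that cannot be repaired along the lines you sketch, and the paper in fact tests against a different quantity precisely for this reason.

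Write $f=p+V$. The paper multiplies \eqref{eq: pressure}, written as $\partial_t p=\nabla p\cdot\nabla f+\gamma p\Delta f$, by $-\Delta f$. The stiff coefficient $\gamma$ then appears \emph{only} in the dissipation $-\gamma\int p|\Delta f|^2$, which is a good sign, while the cross term $\mathcal I=-\int\nabla p\cdot\nabla f\,\Delta f$ carries an $O(1)$ coefficient; the Bochner identity applied to $\mathcal I$ produces $\tfrac23\int p|\Delta f|^2-\tfrac23\int p|D^2f|^2$ plus $V$-errors, and the first of these is absorbed by the dissipation because $\gamma-\tfrac23>0$. In your computation, after multiplying by $-\Delta p$ and using $\int p(\Delta p)^2=\int p|D^2p|^2-\tfrac32\int|\nabla p|^2\Delta p$, the dissipation $-\gamma\int p|D^2p|^2$ comes with the remainder $\bigl(\tfrac{3\gamma}{2}-1\bigr)\int|\nabla p|^2\Delta p$, and the drift term $-\gamma\int p\,\Delta V\,\Delta p$ is also multiplied by $\gamma$. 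Neither can be absorbed uniformly: for the first, after integration by parts and the weighted Young splitting $(\sqrt p\,|D^2p|)\cdot(|\nabla p|^2/\sqrt p)$, the coefficient in front of the $\int|\nabla p|^4/p$ factor is necessarily of order $\gamma$ (you need the $p|D^2p|^2$ share to be strictly below $\gamma$, which forces the other share to grow like $\gamma$). For the second, any Young split leaves either $\gamma\int p|\Delta V|^2$ or $\gamma^2\int p|\Delta V|^2$, both of which blow up as $\gamma\to\infty$, and the alternative integration by parts $\gamma\int p\,\Delta V\,\Delta p = -\gamma\int\Delta V|\nabla p|^2-\gamma\int p\,\nabla\Delta V\cdot\nabla p$ requires third derivatives of $V$, which are not assumed.

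There is a second, independent issue: your claim that the residual $\int|\nabla p|^4/p$ contribution is ``controllable using the $L^\infty$-bound on $p$ together with the basic $L^2$-energy'' is incorrect. The bound $\|p\|_\infty\le C_*$ gives $1/p\ge 1/C_*$, i.e.\ a \emph{lower} bound on the integrand, and near the free boundary $\partial\{p>0\}$ the gradient does not vanish, so $\int|\nabla p|^4/p$ is not a priori finite, let alone uniformly bounded. The paper never encounters this weight precisely because its choice of test function keeps all the nonlinear error terms in the form $\int p\,(\cdot)$ or $\int V\,(\cdot)$ rather than $\int (\cdot)/p$. To repair your argument you should switch the test function to $-\Delta f=-\Delta(p+V)$; this also naturally produces the $-\int\partial_t p\,\Delta V$ term whose treatment by a time integration by parts is where the hypothesis $\partial_t V\in L^{4/3}_{\mathrm{loc}}W^{1,4/3}_x$ actually enters.
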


\smallskip

\subsection{Including active motion -- the non-degenerate case.} 
We now discuss the assumptions and main results for the case with non-vanishing random motion. 
We make the following assumptions on the initial data
\begin{equation}\label{eq: assump id diffusion}
   0\leq n_0,  p_0\in L^1(\R^d)\cap L^\infty(\R^d)\cap H^1(\R^d).
\end{equation}
Let us remark that definition \eqref{eq:defn-Sigma} and the above assumptions imply 
\begin{equation*}
    \nabla \Sigma_0 = n_0 \nabla p_0 + \nu \nabla n_0\in L^2(\R^d).
\end{equation*}

We make the following assumptions on the potential $V(x,t)$:
\begin{equation}\label{eq: assum V diffusion}
\begin{aligned}
    V&\in L^\infty(0,\infty;L^\infty(\R^d))\cap L^2(0,\infty;H^2(\R^d)),\\[0.5em]
  \partial_t V &\in  L^2(0,\infty; L^2(\R^d)),\\[0.5em]
  \nu \Delta V + \partial_t V &\in L^1 (0,\infty; L^\infty(\R^d)).
  \end{aligned}
\end{equation}
Let us emphasise that in this case, no control on $\nabla \partial_t V$ is needed. In fact, the additional regularity coming from the uniform parabolicity permits a different way of treating $\int \partial_t \Sigma \Delta V$ only requiring $\partial_t V \in L^2_{t}L^2_x$.

We now state the main results concerning the solution of \eqref{eq: main-active-motion}.

\begin{theorem}[Uniform bounds for $\nu> 0$]\label{thm: L4 diff}
Let $(n_0, p_0)$, $V=V(x,t)$ satisfy assumptions~\eqref{eq: assump id diffusion}, \eqref{eq: assum V diffusion}. For all $T>0$, there exists a positive constant $C$, independent of $\gamma$, such that the solution, $n(x,t)$, to \eqref{eq: main-active-motion} with initial data $n_0$, 
satisfies  
\begin{equation}
         \int_0^T\!\!\int_{\R^d} |\Delta \Sigma|^2 \dx x \dx t+   \int_0^T\!\!\int_{\R^d} |\nabla \Sigma|^4 \dx x \dx t \leq C,
    \end{equation}  
where $\Sigma=\sigma(n)$ is defined in \eqref{eq:defn-Sigma}.
\end{theorem}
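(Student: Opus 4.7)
The plan is to exploit the uniform parabolicity afforded by $\nu>0$ by deriving an energy estimate on $\nabla\Sigma$ rather than on $\nabla p$, since the singular term $|\nabla p|^2/p$ in equation~\eqref{eq: pressure-active-motion} spoils the direct strategy. Observe first that the density equation can be rewritten as
\[
\partial_t n = \Delta\Sigma + \nabla\cdot(n\nabla V),
\]
because $n\nabla p + \nu\nabla n = \nabla\Sigma$. As a preliminary step, I would establish a uniform-in-$\gamma$ $L^\infty$-bound on $p$ (and hence on $n$ and $\Sigma$) by applying a maximum principle to the auxiliary quantity $q=p+V$: at a spatial maximum, $\nabla p=-\nabla V$ and $\Delta p\leq-\Delta V$, and plugging these into \eqref{eq: pressure-active-motion} while discarding non-positive terms leads to a differential inequality of the form $q_t\leq\partial_t V-\nu\Delta V$ at the maximum, from which the uniform bound follows via the assumption on $\nu\Delta V+\partial_t V$.

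The main step is to test the equation for $n$ by $\Sigma_t=\sigma'(n)\partial_t n$, producing
\[
\int\sigma'(n)(\partial_t n)^2\,dx+\frac{d}{dt}\int\frac{|\nabla\Sigma|^2}{2}\,dx=\int\Sigma_t\,\nabla\cdot(n\nabla V)\,dx.
\]
Splitting the RHS as $\int\Sigma_t\nabla n\cdot\nabla V+\int\Sigma_t n\Delta V$ and exploiting the two identities $\Sigma_t\nabla n=\partial_t n\,\nabla\Sigma$ and $\sigma'(n)n\,\partial_t n=\partial_t\tilde\Sigma$, where $\tilde\Sigma(n)=\frac{\gamma}{\gamma+2}n^{\gamma+2}+\frac{\nu}{2}n^2$, one obtains
\[
\int\Sigma_t\,\nabla\cdot(n\nabla V)\,dx=\int\partial_t n\,\nabla\Sigma\cdot\nabla V\,dx+\int\partial_t\tilde\Sigma\,\Delta V\,dx.
\]
The first term is absorbed into $\int\sigma'(n)(\partial_t n)^2$ by Young's inequality (using $\sigma'(n)\geq\nu$), leaving a remainder controlled by $\int|\nabla\Sigma|^2|\nabla V|^2$. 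The crucial second term would produce the non-uniform factor $\sigma'(n)n\sim\gamma\Sigma$ under a direct application of Young's inequality; instead, I would integrate by parts in time,
\[
\int_0^T\!\!\int\partial_t\tilde\Sigma\,\Delta V = \Big[\!\int\tilde\Sigma\,\Delta V\Big]_0^T-\int_0^T\!\!\int\Delta\tilde\Sigma\,\partial_t V,
\]
and invoke the identity $\Delta\tilde\Sigma=n\Delta\Sigma+|\nabla\Sigma|^2/\sigma'(n)$. The resulting pieces $\int n\Delta\Sigma\,\partial_t V$ and $\int|\nabla\Sigma|^2\partial_t V/\sigma'(n)$ are then controlled using the uniform bound on $\|n\|_{L^\infty}$ together with $\partial_t V\in L^2_{t,x}$, the small fraction of $\int|\Delta\Sigma|^2$ being absorbed into the dissipation (which dominates $\nu\int|\Delta\Sigma|^2$ through the lower bound $\sigma'(n)\geq\nu$ applied to the identity $\partial_t n=\Delta\Sigma+\nabla\cdot(n\nabla V)$). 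The boundary terms $[\int\tilde\Sigma\Delta V]_0^T$ are handled after a further spatial integration by parts as $[\int n\nabla\Sigma\cdot\nabla V]_0^T$, using $\nabla V\in C([0,T];L^2)$ coming from $V\in L^2_tH^2_x\cap H^1_tL^2_x$ by the Lions--Magenes theorem. A Gronwall argument then yields $\nabla\Sigma\in L^\infty_tL^2_x$ and $\Delta\Sigma\in L^2_{t,x}$ uniformly in $\gamma$, i.e., $\Sigma\in L^2(0,T;H^2(\R^d))$.

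The $L^4$-bound on $\nabla\Sigma$ follows at once from the Gagliardo--Nirenberg-type inequality
\[
\int_{\R^d}|\nabla f|^4\,dx\leq C\,\|f\|_{L^\infty}^2\int_{\R^d}|D^2 f|^2\,dx,
\]
applied to $f=\Sigma$, combined with the Calder\'on--Zygmund estimate $\|D^2\Sigma\|_{L^2(\R^d)}\leq C\|\Delta\Sigma\|_{L^2(\R^d)}$ and the uniform bounds on $\|\Sigma\|_{L^\infty}$ and $\|\Delta\Sigma\|_{L^2_{t,x}}$. The main obstacle throughout the argument is the term $\int\partial_t\tilde\Sigma\,\Delta V$: a direct pointwise treatment produces $\gamma$-dependent constants, and only the time-integration trick, which trades the hopeless factor $\sigma'(n)n\,\Delta V$ for the benign product $\Delta\tilde\Sigma\,\partial_t V$, makes the whole argument uniform in~$\gamma$.
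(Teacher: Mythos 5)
Your proposal follows essentially the same route as the paper: rewrite the density equation as $\partial_t n = \Delta\Sigma + \nabla\cdot(n\nabla V)$, obtain the uniform $L^\infty$-bound on $p$ (and hence on $n$, $\Sigma$) via a maximum principle applied to $p+V$ using the assumption on $\nu\Delta V + \partial_t V$, derive an energy identity for $|\nabla\Sigma|^2$, and -- this is the paper's key observation as well -- isolate the term $\int\partial_t\tilde\Sigma\,\Delta V$ (the paper writes $S$ for your $\tilde\Sigma$, defined exactly by $S'(s)=s\,\sigma'(s)$), convert it by time and spatial integration by parts into $-\int\Delta\tilde\Sigma\,\partial_t V$, and then use $\Delta\tilde\Sigma = n\Delta\Sigma + |\nabla\Sigma|^2/\sigma'(n)$ to obtain manageable pieces. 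The paper tests the $\Sigma$ equation by $-(\Delta\Sigma + n\Delta V)$ rather than testing the $n$ equation by $\partial_t\Sigma$, but the two are algebraically equivalent and both reach the term $\mathcal J=\int\partial_t S\,\Delta V$; and the final $L^4$ control via the Gagliardo--Nirenberg/Hessian inequality is the same.

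One step in your writeup does not close as stated, however. You say the pieces $\int|\nabla\Sigma|^2|\nabla V|^2$ and $\int|\nabla\Sigma|^2|\partial_t V|/\sigma'(n)$ are handled and then ``a Gronwall argument then yields'' the uniform bounds. A Gronwall inequality of the form $\frac{d}{dt}\|\nabla\Sigma\|_{L^2}^2\le G(t)\|\nabla\Sigma\|_{L^2}^2 + (\text{data})$ would require $|\nabla V|^2$ and $|\partial_t V|$ to lie in $L^1_t L^\infty_x$, which is not implied by $V\in L^\infty_{t,x}\cap L^2_tH^2_x$, $\partial_t V\in L^2_{t,x}$ (recall that $H^2(\R^d)\not\hookrightarrow W^{1,\infty}(\R^d)$ for $d\ge 2$). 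These two terms instead have to be estimated via
\[
\int |\nabla\Sigma|^2\,|\nabla V|^2\le \varepsilon\|\nabla\Sigma\|_{L^4}^4 + C_\varepsilon\|\nabla V\|_{L^4}^4,\qquad
\int |\nabla\Sigma|^2\,|\partial_t V|\le \varepsilon\|\nabla\Sigma\|_{L^4}^4 + C_\varepsilon\|\partial_t V\|_{L^2}^2,
\]
and then the $L^4$--$L^2$ Hessian inequality $\|\nabla\Sigma\|_{L^4}^4\le C\|\Sigma\|_\infty^2\|\Delta\Sigma\|_{L^2}^2$ -- which you invoke only at the very end to deduce the $L^4$ bound -- must already be used \emph{inside} the main energy estimate so that these contributions can be absorbed into the dissipation $\int|\Delta\Sigma|^2$ for $\varepsilon$ small. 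This is exactly how the paper closes the estimate (no Gronwall is used). Since you clearly have the relevant inequality in hand, this is a matter of reordering the argument rather than a missing idea, but as written the ``Gronwall'' step would not go through under the stated hypotheses on $V$.

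A small point in your favour: you explicitly account for the time boundary term $[\int\tilde\Sigma\,\Delta V]_0^T$ produced by the temporal integration by parts, rewriting it as $[-\int n\nabla\Sigma\cdot\nabla V]_0^T$ and controlling it using $\nabla V\in C([0,T];L^2)$ (Lions--Magenes) and the assumption $\nabla\Sigma_0\in L^2$, absorbing the $T$-endpoint by Young's inequality. The paper's presentation passes over this boundary contribution silently, so your treatment is, if anything, slightly more complete on this point.
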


\bigskip

In the case of active motion, $\nu>0$, we are actually able to further weaken the assumptions on the potential needed to show the $L^4$ bound of $\nabla p_\infty$.
To do so, we first prove that, as $\gamma\to\infty$, the solution to Eq.~\eqref{eq: main-active-motion} converges to $n_\infty$, the solution of
\begin{equation}\label{eq: limit}
\begin{aligned}
    &\partialt{n_\infty} =\Delta \Sigma_\infty + \div(n_\infty \nabla V),\\[0.5em]
    &p_\infty(1-n_\infty)=0,
\end{aligned}
\end{equation}
where $\Sigma_\infty=\nu n_\infty + p_\infty$ is the limit of $\Sigma_\gamma$. The uniqueness of solutions to the limit equation was proven in \cite{DM2016}.
In order to pass to the limit we need stronger assumptions on the potential $V$ (which, for the sake of readability, will be stated in Section~\ref{sec: convergence}, see \eqref{assum V: convergence}). However, the final estimates on $\Delta\Sigma_\infty$ and $\nabla \Sigma_\infty$ will be independent of such assumptions, which we will remove through a regularisation argument, as detailed in Section~\ref{sec: convergence}.
\begin{theorem}[Convergence]\label{thm:convergence}
Let $V$ satisfy assumption~\eqref{assum V: convergence}. Then there exist $n_\infty$ and $\Sigma_\infty$ such that, up to a subsequence,
    \begin{align*}
        n_\gamma \to n_\infty &\text{ strongly in } C(0,T;L^2(\R^d)),\\[0.5em]
        \Sigma_\gamma\to \Sigma_\infty &\text{ strongly in } L^2(0,T;H^1(\R^d)),
    \end{align*}
as $\gamma \to \infty$.
\end{theorem}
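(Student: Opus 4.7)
The plan is to combine the uniform bounds of Theorem~\ref{thm: L4 diff} with $L^\infty$-in-time estimates propagated from the initial data, and then to apply Aubin--Lions-type compactness arguments to extract convergent subsequences. The stronger hypothesis \eqref{assum V: convergence} is expected to provide enough regularity on $V$ to carry out each step uniformly in $\gamma$, while the assumptions will be relaxed in a subsequent step by a regularisation procedure.

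First I would collect the uniform-in-$\gamma$ a priori estimates. The maximum principle applied to \eqref{eq: pressure-active-motion}, using $\nu\Delta V + \partial_t V \in L^1(L^\infty)$, yields uniform bounds for $n_\gamma$, $p_\gamma$, $\Sigma_\gamma$ in $L^\infty(0,T;L^\infty(\R^d))$; mass conservation gives the $L^1$ bound. The energy identity underlying Theorem~\ref{thm: L4 diff} actually provides the pointwise-in-time bound $\nabla\Sigma_\gamma \in L^\infty(0,T;L^2(\R^d))$. Since $\nu>0$, the algebraic identity $\nabla\Sigma_\gamma = (\nu+p_\gamma)\nabla n_\gamma$ can be divided to give $\|\nabla n_\gamma(t)\|_{L^2}\leq \nu^{-1}\|\nabla\Sigma_\gamma(t)\|_{L^2}$, so that $n_\gamma$ is uniformly bounded in $L^\infty(0,T;H^1(\R^d))$. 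From \eqref{eq: main-active-motion}, $\partial_t n_\gamma = \Delta\Sigma_\gamma - \nabla\cdot(n_\gamma\nabla V)$ is uniformly bounded in $L^2(0,T;L^2(\R^d))$, provided $\nabla V$ is suitably controlled by \eqref{assum V: convergence}. Tightness at infinity is obtained by a standard second-moment estimate, multiplying \eqref{eq: main-active-motion} by $|x|^2$ and using the growth/decay of $V$.

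For the strong convergence $n_\gamma\to n_\infty$ in $C([0,T];L^2(\R^d))$, I would apply a Simon-type compactness theorem: the uniform $L^\infty(H^1)$ bound gives, via Rellich and the tail control, precompactness of $\{n_\gamma(t)\}$ in $L^2(\R^d)$ for each fixed $t$; the uniform $L^2(L^2)$ bound on $\partial_t n_\gamma$ gives the equicontinuity estimate $\|n_\gamma(t)-n_\gamma(s)\|_{L^2} \leq C|t-s|^{1/2}$, uniformly in $\gamma$. An Arzel\`a--Ascoli argument then provides a subsequence converging strongly in $C([0,T];L^2(\R^d))$. For the convergence $\Sigma_\gamma\to\Sigma_\infty$ in $L^2(0,T;H^1(\R^d))$, I compute $\partial_t\Sigma_\gamma = \sigma'(n_\gamma)\partial_t n_\gamma = (\nu+p_\gamma)\partial_t n_\gamma$, which is uniformly bounded in $L^2(0,T;L^2)$ thanks to the $L^\infty$ bound on $p_\gamma$. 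Combined with the uniform $L^2(0,T;H^2)$ bound on $\Sigma_\gamma$ from Theorem~\ref{thm: L4 diff}, Aubin--Lions with the compact embedding $H^2 \Subset H^1$ on bounded domains, upgraded via tightness, yields strong convergence in $L^2(0,T;H^1(\R^d))$.

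The main obstacle is working on the unbounded spatial domain $\R^d$: the classical Aubin--Lions and Simon statements provide compactness only locally, and must be upgraded to global statements through uniform tightness. This is precisely where the full strength of assumption \eqref{assum V: convergence} enters, both in closing the moment estimates and in propagating the pointwise-in-time $L^\infty$ control on $p_\gamma$ needed to handle terms of the form $(\nu+p_\gamma)\partial_t n_\gamma$ and $\nabla\cdot(n_\gamma\nabla V)$ uniformly in $\gamma$.
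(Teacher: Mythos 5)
Your first half (uniform $L^\infty$ bounds, $\|\nabla n_\gamma(t)\|_{L^2}\leq \nu^{-1}\|\nabla \Sigma_\gamma(t)\|_{L^2}$, the uniform $L^2_{x,t}$ bound on $\partial_t n_\gamma$ from the equation, and Aubin--Lions/Simon for $n_\gamma\to n_\infty$ in $C([0,T];L^2)$) is essentially the paper's argument, modulo the fact that the paper works on $\T^d$ precisely to dodge the tightness issue you propose to handle with moment estimates. But there is a genuine gap in the second half. You write $\partial_t\Sigma_\gamma=\sigma'(n_\gamma)\partial_t n_\gamma=(\nu+p_\gamma)\partial_t n_\gamma$ and conclude a uniform $L^2_{x,t}$ bound on $\partial_t \Sigma_\gamma$ from the $L^\infty$ bound on $p_\gamma$. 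This is false: from \eqref{eq:defn-Sigma} one has $\sigma'(n)=\nu+\gamma n^{\gamma}=\nu+\gamma p$, so $\partial_t\Sigma_\gamma=(\nu+\gamma p_\gamma)\partial_t n_\gamma$, and the coefficient is only bounded by $\nu+\gamma C_*$, which blows up as $\gamma\to\infty$. (The same slip appears in your identity $\nabla\Sigma_\gamma=(\nu+p_\gamma)\nabla n_\gamma$, though there it is harmless since you only use $\sigma'\geq\nu$.) The stiff factor $\nu+\gamma p_\gamma$ also sits in front of $\Delta\Sigma+n\Delta V$ in the equation \eqref{eq: new} for $\Sigma$, so there is no route to a uniform-in-$\gamma$ time-derivative (or time-equicontinuity) estimate for $\Sigma_\gamma$, and the Aubin--Lions argument with $H^2\Subset H^1$ that you rely on for the strong $L^2(0,T;H^1)$ convergence of $\Sigma_\gamma$ collapses. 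This is not a peripheral step: the strong convergence of $\nabla\Sigma_\gamma$ is the main content of the theorem.

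The paper's way around this is different in kind. It first gets only \emph{weak} compactness of $\nabla\Sigma_\gamma$ and $\Delta\Sigma_\gamma$ in $L^2_{x,t}$ (Lemmata on $\nabla\Sigma_\gamma$, $\Delta\Sigma_\gamma$, $\nabla p_\gamma$), uses the strong compactness of $n_\gamma$ to pass to the limit in the equation and identify $\partial_t n_\infty=\Delta\Sigma_\infty+\nabla\cdot(n_\infty\nabla\tilde V)$, and then upgrades weak to strong convergence of $\nabla\Sigma_\gamma$ by an energy-identity (Minty-type) argument: testing the $\gamma$-equation with $\Sigma_\gamma$ and the limit equation with $\Sigma_\infty$, it shows $\|\nabla\Sigma_\gamma\|_{L^2_{x,t}}\to\|\nabla\Sigma_\infty\|_{L^2_{x,t}}$, which together with weak convergence gives strong $L^2$ convergence of the gradients. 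If you want to keep your structure, you would have to replace your $\partial_t\Sigma_\gamma$ bound by such a norm-convergence (or monotonicity) argument; no uniform parabolic-in-time estimate on $\Sigma_\gamma$ is available.
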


\bigskip

Finally, we state the assumptions and the result on the boundedness of the $L^2_tH^2_x$-norm and the $L^4_tW^{1,4}_x$-norm of the limit $\Sigma_\infty$. We assume
\begin{equation}\label{assum V for p_infty}
\begin{aligned}
    V&\in L^\infty(0,\infty;L^\infty(\R^d))\cap L^2(0,\infty;H^2(\R^d)),\\[0.5em]
    \partial_t V &\in L^2(0,\infty; L^2(\R^d)).
  \end{aligned}
\end{equation}

 \begin{theorem}[Bounds for $\Sigma_\infty$]\label{thm: L4 diff infty}
Let $(n_0, p_0)$, $V=V(x,t)$ satisfy assumptions~\eqref{eq: assump id diffusion} and \eqref{assum V for p_infty}, and let $(n_\infty,\Sigma_\infty)$ be a solution to \eqref{eq: limit}. For all $T>0$, there exists a positive constant $C$ such that
\begin{equation*}
         \int_0^T\!\!\int_{\R^d} |\Delta \Sigma_\infty|^2 \dx x \dx t+   \int_0^T\!\!\int_{\R^d} |\nabla \Sigma_\infty|^4 \dx x \dx t \leq C.
    \end{equation*}  
\end{theorem}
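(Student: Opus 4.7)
Since the assumptions \eqref{assum V for p_infty} are strictly weaker than both \eqref{eq: assum V diffusion} and the convergence hypotheses \eqref{assum V: convergence} used in Theorems~\ref{thm: L4 diff} and \ref{thm:convergence}, my strategy is a regularisation-and-pass-to-the-limit argument at the level of the potential. I approximate $V$ by a smooth family $V_\epsilon$ for which the two earlier theorems apply, derive the estimate for the associated limit $\Sigma_\infty^\epsilon$ with a constant depending only on quantities controlled by \eqref{assum V for p_infty}, and finally let $\epsilon\to 0$.

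\textbf{Regularisation and first estimates.} Take a compactly supported space-time mollifier $\rho_\epsilon$ and set $V_\epsilon := V * \rho_\epsilon$ (with a routine cut-off if needed to reach global-in-time smoothness). Young's inequality gives
\begin{align*}
\|V_\epsilon\|_{L^\infty_t L^\infty_x} &\leq \|V\|_{L^\infty_t L^\infty_x}, \\
\|V_\epsilon\|_{L^2_t H^2_x} &\leq \|V\|_{L^2_t H^2_x}, \\
\|\partial_t V_\epsilon\|_{L^2_t L^2_x} &\leq \|\partial_t V\|_{L^2_t L^2_x},
\end{align*}
uniformly in $\epsilon$, while for each fixed $\epsilon>0$ the regularised potential $V_\epsilon$ satisfies both \eqref{eq: assum V diffusion} and \eqref{assum V: convergence}. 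Let $n_\gamma^\epsilon$ solve \eqref{eq: main-active-motion} with potential $V_\epsilon$ and let $\Sigma_\gamma^\epsilon$ be the corresponding quantity from \eqref{eq:defn-Sigma}. Theorem~\ref{thm: L4 diff} bounds $\Sigma_\gamma^\epsilon$ in $L^2_t H^2_x \cap L^4_t W^{1,4}_x$ for each $\epsilon$, and Theorem~\ref{thm:convergence} produces a limit $\Sigma_\infty^\epsilon$ solving \eqref{eq: limit} with potential $V_\epsilon$, inheriting the same bounds by weak lower semicontinuity.

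\textbf{The main obstacle: uniformity in $\epsilon$.} Inspecting the proof of Theorem~\ref{thm: L4 diff}, the constant involves $\|p_\gamma^\epsilon\|_\infty$, which for finite $\gamma$ is controlled by a maximum principle using $\nu \Delta V_\epsilon + \partial_t V_\epsilon \in L^1_t L^\infty_x$---exactly the hypothesis that is absent from \eqref{assum V for p_infty} and that does not pass to the limit $\epsilon\to 0$. This is the crux of the argument, and it is resolved, as anticipated at the end of Section~\ref{sec: main results}, by working directly at the level of the limit equation: the density constraint $0\leq n_\infty^\epsilon\leq 1$ together with the complementarity relation $p_\infty^\epsilon(1-n_\infty^\epsilon)=0$ reduces $\|p_\infty^\epsilon\|_{L^\infty_{x,t}}$ to the value $\|V_\epsilon\|_{L^\infty_{x,t}} \leq \|V\|_{L^\infty_{x,t}}$, through a comparison argument with a time-independent supersolution built from $V_\epsilon$ (equivalently, testing \eqref{eq: limit} with $(p_\infty^\epsilon-\|V_\epsilon\|_\infty)_+$ and using that the diffusive contribution has a sign there). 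Consequently $\Sigma_\infty^\epsilon\in L^\infty$ uniformly in $\epsilon$. I then retrace the $\ddt\tfrac12\int |\nabla\Sigma_\infty^\epsilon|^2$ computation from the proof of Theorem~\ref{thm: L4 diff} directly on \eqref{eq: limit}: after integration by parts the drift contributions pair only with $D^2 V_\epsilon\in L^2_t L^2_x$ and $\partial_t V_\epsilon\in L^2_t L^2_x$, and by Cauchy--Schwarz and Young's inequality they absorb into the good parabolic term $\|\Delta\Sigma_\infty^\epsilon\|_{L^2_{t,x}}^2$. The resulting bound depends only on $\|V\|_\infty$, $\|V\|_{L^2_tH^2_x}$, $\|\partial_t V\|_{L^2_tL^2_x}$ and $\|\Sigma_0\|_{H^1}$, hence is uniform in $\epsilon$. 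The $L^4_t W^{1,4}_x$ control on $\nabla\Sigma_\infty^\epsilon$ then follows exactly as in the proof of Theorem~\ref{thm: L4 diff}, namely from the functional inequality linking $\|\nabla \Sigma\|_4^4$ to $\|\Sigma\|_\infty$ and $\|\Delta\Sigma\|_2^2$.

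\textbf{Pass to the limit $\epsilon\to 0$.} The uniform-in-$\epsilon$ bounds just obtained, together with an Aubin--Lions argument (with $\partial_t n_\infty^\epsilon$ bounded in a negative-order Sobolev space via \eqref{eq: limit}), extract a subsequence along which $n_\infty^\epsilon\to \tilde n$ strongly in $L^2_{t,x}$ and $\Sigma_\infty^\epsilon \rightharpoonup \tilde \Sigma$ weakly in $L^2_t H^2_x \cap L^4_t W^{1,4}_x$. The convergences $V_\epsilon\to V$ in $L^\infty_t L^\infty_x$ and in $L^2_t H^2_x$ make the passage to the limit in \eqref{eq: limit} straightforward, and the uniqueness result of \cite{DM2016} identifies $(\tilde n,\tilde\Sigma)=(n_\infty,\Sigma_\infty)$. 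Weak lower semicontinuity of both norms then yields the claimed estimate.
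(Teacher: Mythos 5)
Your overall scaffolding (mollify $V$, work with the limit solution for the regularised potential, prove an estimate uniform in the regularisation via an improved $L^\infty$ bound depending only on $\|V\|_\infty$, then let the regularisation parameter go to zero and identify the limit by the uniqueness result of \cite{DM2016}) is exactly the paper's strategy. However, the two steps you place at the crux are precisely where your argument has genuine gaps. First, the improved bound $\|p_\infty^\epsilon\|_\infty\lesssim\|V_\epsilon\|_\infty$ is not obtained in the paper by testing the limit equation, and your proposed mechanism does not close as stated: testing \eqref{eq: limit} with $(p_\infty^\epsilon-\|V_\epsilon\|_\infty)_+$ leaves the drift contribution $\int_{\{p>\lambda\}}\nabla V_\epsilon\cdot\nabla p_\infty^\epsilon$, which has no sign; after Young's inequality you only get a Caccioppoli-type bound $\int_{\{p>\lambda\}}|\nabla p_\infty^\epsilon|^2\le\int_{\{p>\lambda\}}|\nabla V_\epsilon|^2$, and upgrading this to $L^\infty$ by a Stampacchia/De Giorgi iteration would require $\nabla V\in L^q_x$ with $q>d$, which \eqref{assum V for p_infty} does not provide. (A test function of the form $(p_\infty^\epsilon+V_\epsilon-\lambda)_+$, which recombines drift and diffusion into $|\nabla(p_\infty^\epsilon+V_\epsilon)|^2$ on the saturated set, is the natural candidate, but then one must rigorously justify the pairing with $\partial_t n_\infty^\epsilon$, the a.e.\ vanishing of $\nabla n_\infty^\epsilon$ and $\partial_t n_\infty^\epsilon$ on $\{n_\infty^\epsilon=1\}$, and rule out an additive constant on the torus; none of this is addressed.) The paper instead proves the $L^\infty$ bound by re-approximating at an arbitrary time $t_0$ with well-prepared data $n_\gamma(t_0)\le 1-\gamma^{-1/2}$, invoking the finite-$\gamma$ maximum principle of Lemma~\ref{lemma Cs} on a window $[t_0,t_0+\epsilon]$ so that the $L^1_tL^\infty_x$ contribution of $\nu\Delta\tilde V+\partial_t\tilde V$ vanishes as $\epsilon\to0$, and using uniqueness of the limit together with a Lebesgue point argument.

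Second, you cannot simply ``retrace the $\tfrac{d}{dt}\tfrac12\int|\nabla\Sigma_\infty^\epsilon|^2$ computation directly on \eqref{eq: limit}''. The energy estimate of Theorem~\ref{thm: L4 diff} uses the constitutive structure at finite $\gamma$: the equation $\partial_t\Sigma=(\nu+\gamma p)(\Delta\Sigma+n\Delta V)+\nabla\Sigma\cdot\nabla V$ and the auxiliary function $S$ with $S'(s)=s\sigma'(s)$ have no analogue for the limit problem, where $\Sigma_\infty=\nu n_\infty+p_\infty$ is not a function of $n_\infty$ (on $\{n_\infty=1\}$ the pressure is a Lagrange multiplier) and $\partial_t\Sigma_\infty$ has no a priori meaning or regularity. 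This is exactly why the paper establishes the inequality \eqref{eq:dissipgradsigmarevisited2} at finite $\gamma$, keeps the two problematic terms $\int|\nabla\Sigma_\gamma|^2|\partial_t\tilde V|$ and $\int|\nabla\Sigma_\gamma\cdot\nabla\tilde V|^2$ unestimated, and passes to the limit $\gamma\to\infty$ in the inequality using the strong $L^2_tH^1_x$ convergence of $\Sigma_\gamma$ from Theorem~\ref{thm:convergence} and weak lower semicontinuity of the $L^2$ norm of $\Delta\Sigma_\gamma+n_\gamma\Delta\tilde V$; only then is the estimate closed with the improved $L^\infty$ bound and the functional inequality $\|\nabla\Sigma_\infty\|_4^4\le C\|\Sigma_\infty\|_\infty^2\|\Delta\Sigma_\infty\|_2^2$. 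Your final $\epsilon\to0$ step is fine, but as written the core of your argument replaces the paper's two delicate limit passages with assertions that are not justified and, in the form proposed, would not go through.
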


\begin{corollary}\label{cor: L4 diff}
Let $(n_0, p_0)$, $V=V(x,t)$ satisfy assumptions~\eqref{eq: assump id diffusion} and \eqref{assum V for p_infty}, and let $p_\infty$ be the pressure associated to Eq.~\eqref{eq: limit}. There exists a positive constant $C$ such that
    \begin{equation*}
        \int_0^T\!\!\!\int_{\R^d} |\nabla p_\infty|^4 \dx x \dx t\leq C.
    \end{equation*}
\end{corollary}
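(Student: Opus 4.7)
The plan is to reduce the corollary directly to Theorem~\ref{thm: L4 diff infty} via the pointwise algebraic identification of $p_\infty$ as a function of $\Sigma_\infty$. The key observation is that the complementarity relation $p_\infty(1-n_\infty)=0$ together with $0\le n_\infty\le 1$ and $p_\infty\ge 0$ forces a piecewise description of $\Sigma_\infty=\nu n_\infty+p_\infty$: on the set $\{n_\infty<1\}$ we have $p_\infty=0$ and hence $\Sigma_\infty=\nu n_\infty<\nu$, while on $\{n_\infty=1\}$ we have $\Sigma_\infty=\nu+p_\infty\ge \nu$. Combining these two cases yields the pointwise identity
\begin{equation*}
    p_\infty=(\Sigma_\infty-\nu)_+\qquad\text{a.e. in }\R^d\times(0,T).
\end{equation*}

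First, I would make this identification rigorous using the regularity of $\Sigma_\infty$ furnished by Theorem~\ref{thm: L4 diff infty}: since $\Sigma_\infty\in L^2(0,T;H^2(\R^d))$ and, in particular, $\Sigma_\infty(\cdot,t)\in H^1(\R^d)$ for a.e.\ $t$, the Stampacchia chain rule applies to the Lipschitz truncation $s\mapsto(s-\nu)_+$ and yields
\begin{equation*}
    \nabla p_\infty=\mathbbm{1}_{\{\Sigma_\infty>\nu\}}\,\nabla\Sigma_\infty\qquad\text{a.e.,}
\end{equation*}
with in particular $\nabla p_\infty=0$ a.e.\ on the level set $\{\Sigma_\infty=\nu\}=\{p_\infty=0,\;n_\infty=1\}$.

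Second, I would simply raise this identity to the fourth power and integrate. Pointwise a.e.\ one has $|\nabla p_\infty|^4\le|\nabla\Sigma_\infty|^4$, so integrating over $\R^d\times(0,T)$ and invoking the $L^4_tW^{1,4}_x$ bound of Theorem~\ref{thm: L4 diff infty} gives
\begin{equation*}
    \int_0^T\!\!\int_{\R^d}|\nabla p_\infty|^4\dx x\dx t\le\int_0^T\!\!\int_{\R^d}|\nabla\Sigma_\infty|^4\dx x\dx t\le C,
\end{equation*}
which is the desired estimate.

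I do not anticipate any serious obstacle: all the analytic work is packaged inside Theorem~\ref{thm: L4 diff infty}, and what remains is essentially the algebraic observation that the saturation constraint fixes $p_\infty$ as the positive part of $\Sigma_\infty-\nu$ (the precise point made in the abstract when the authors announce that \emph{the pressure is the positive part of an $H^2$-function}). The only delicate point is checking the chain rule on the free boundary $\{\Sigma_\infty=\nu\}$, which is standard for Sobolev functions composed with the positive-part truncation.
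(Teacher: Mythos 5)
Your proposal is correct and takes essentially the same route as the paper: both proofs use the complementarity relation to identify $p_\infty=(\Sigma_\infty-\nu)_+$ and then transfer the $L^4(0,T;W^{1,4}(\R^d))$ bound of Theorem~\ref{thm: L4 diff infty} to $p_\infty$ through this Lipschitz truncation. Your explicit Stampacchia chain-rule step merely spells out what the paper leaves implicit when it says the regularity of $p_\infty$ is that of the positive part of an $H^2$-function.
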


\section{Preliminaries}
\label{sec: preliminaries}

We collect some useful functional identities and inequalities that will be used extensively in the proofs of the main results. 
\begin{proposition}
    \label{prop:identities}
    Given two functions $g, h \in H^{2}(\R^d) \cap L^{\infty}(\R^d)$, we have
    \begin{equation}
        \label{eq:identity-equality}
        \int_{\R^d} |\nabla g|^2\Delta g \dx x = -\frac 23 \int_{\R^d} g |\Delta g|^2 \dx x+ \frac 23 \int_{\R^d} g |D^2 g|^2 \dx x,
    \end{equation}
    as well as
    \begin{equation}\label{inequality L4}
        \int_{\R^d} |\nabla g|^4 \dx x \leq 8 \int_{\R^d} g^2 |\Delta g|^2 \dx x+  4\int_{\R^d} g^2 |D^2 g|^2 \dx x,
    \end{equation}
    and
    \begin{equation}
    	\label{eq:identity-3}
		\int_{\R^d}   \Delta h |\nabla g|^2 \dx x=\int_{\R^d}  h |D^2 g|^2  -   h |\Delta g|^2 +  \nabla g  D^2 h \nabla g \dx x.
    \end{equation}
\end{proposition}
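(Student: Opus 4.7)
The plan is to derive all three relations from integration by parts combined with the Bochner-type identity
\[
\Delta(|\nabla g|^2)=2|D^2 g|^2+2\,\nabla g\cdot\nabla\Delta g.
\]
Since $g,h\in H^2(\R^d)\cap L^\infty(\R^d)$ do not, a priori, possess sufficient decay at infinity to justify the required integrations by parts, I would first establish the identities for $g,h\in C_c^\infty(\R^d)$ and then extend them by a standard truncation-and-mollification argument, using the $L^\infty$ bounds on $g,h$ together with the $L^2$ control on their second derivatives to pass to the limit via dominated convergence. I expect this approximation step to be the only real technical point; everything else is purely algebraic.

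For \eqref{eq:identity-equality}, I would compute $\int g\,\Delta(|\nabla g|^2)\dx x$ in two different ways. A single integration by parts produces $\int(\Delta g)\,|\nabla g|^2\dx x$; expanding instead via Bochner and integrating the term $2\int g\,\nabla g\cdot\nabla\Delta g\dx x$ by parts produces $2\int g|D^2 g|^2\dx x - 2\int g(\Delta g)^2\dx x - 2\int(\Delta g)|\nabla g|^2\dx x$. Equating the two expressions gives a single linear relation for $\int(\Delta g)|\nabla g|^2\dx x$, whose solution yields the stated factor $2/3$.

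For the inequality \eqref{inequality L4}, I would start from
\[
\int_{\R^d}|\nabla g|^4\dx x = -\int_{\R^d}g\,\nabla\cdot(|\nabla g|^2\nabla g)\dx x = -2\int_{\R^d}g\,\nabla g\cdot D^2 g\,\nabla g\dx x - \int_{\R^d}g|\nabla g|^2\Delta g\dx x,
\]
obtained by one integration by parts. Each term on the right is bounded via weighted Cauchy--Schwarz, e.g.\ $\int|g||D^2 g||\nabla g|^2\le(\int g^2|D^2 g|^2)^{1/2}(\int|\nabla g|^4)^{1/2}$, and analogously with $|\Delta g|$ in place of $|D^2 g|$. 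A suitable application of Young's inequality then absorbs a fraction of $\int|\nabla g|^4$ on the left-hand side, leaving the two weighted second-order quantities on the right with explicit constants produced by optimising the Young parameters.

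Identity \eqref{eq:identity-3} is the bilinear version of \eqref{eq:identity-equality}. Starting from $\int(\Delta h)|\nabla g|^2\dx x=\int h\,\Delta(|\nabla g|^2)\dx x$, Bochner and one integration by parts yield $2\int h|D^2 g|^2\dx x - 2\int h(\Delta g)^2\dx x - 2\int(\Delta g)\,\nabla h\cdot\nabla g\dx x$. The crucial step is to integrate the last, mixed term by parts once more: using $\nabla(\nabla h\cdot\nabla g)=D^2 h\,\nabla g + D^2 g\,\nabla h$ produces both the desired quadratic form $2\int\nabla g\cdot D^2 h\,\nabla g\dx x$ and the term $\int\nabla h\cdot\nabla(|\nabla g|^2)\dx x=-\int|\nabla g|^2\Delta h\dx x$, which coincides with the left-hand side up to sign. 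Collecting and dividing by two then gives \eqref{eq:identity-3}.
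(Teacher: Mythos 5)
Your overall strategy coincides with the paper's for \eqref{eq:identity-equality} and \eqref{eq:identity-3}: both identities are obtained by moving the Laplacian onto $|\nabla g|^2$, expanding via the Bochner-type formula $\Delta(|\nabla g|^2)=2|D^2g|^2+2\nabla g\cdot\nabla\Delta g$, integrating the mixed term by parts once more, and in the bilinear case recognising that $\int \nabla h\cdot D^2g\,\nabla g = -\tfrac12\int|\nabla g|^2\Delta h$ reproduces the left-hand side up to sign. Your remark about first working with smooth compactly supported functions and then passing to $H^2\cap L^\infty$ by density is a sensible precaution which the paper leaves implicit.

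For \eqref{inequality L4} you diverge mildly from the paper's route. After the first integration by parts you estimate both terms $-2\int g\,\nabla g\cdot D^2g\,\nabla g$ and $-\int g\,\Delta g\,|\nabla g|^2$ directly by Cauchy--Schwarz and Young, whereas the paper performs one more integration by parts to convert the $D^2g$-term into a combination of $g^2|\Delta g|^2$, $g^2|D^2g|^2$ and $g\,\Delta g\,|\nabla g|^2$, and then applies Young's inequality only to the last of these. Both arguments are correct and absorb a fraction of $\int|\nabla g|^4$, but they do not produce the same numbers: your optimisation yields a bound such as $\int|\nabla g|^4 \leq 8\int g^2|D^2g|^2 + 2\int g^2|\Delta g|^2$ (or similar, depending on the Young parameters), and no choice of parameters in your scheme can reproduce the stated $8\int g^2|\Delta g|^2 + 4\int g^2|D^2g|^2$. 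This is harmless for the applications in the paper, which only require some universal constants in front of the two weighted Hessian quantities, but if one insists on matching the exact constants in the proposition as written, the paper's extra integration by parts is needed.

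Everything else checks out; the three relations are established correctly.
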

\begin{proof}
Using integration by parts, we have
    \begin{align*}
        \int_{\R^d}  |\nabla g|^2 \Delta g  \dx x
        &= \int_{\R^d}  g \Delta |\nabla g|^2   \dx x \\
        &= \int_{\R^d}  2g |D^2 g|^2 + 2g \nabla g \cdot \nabla \Delta g  \dx x\\
        &= \int_{\R^d} 2g |D^2 g|^2 - 2 |\nabla g|^2 \Delta g - 2 g |\Delta g|^2  \dx x.
    \end{align*}
Upon rearranging, we obtain
    \begin{align*}
        \int_{\R^d} 3 |\nabla g|^2 \Delta g  \dx x = \int_{\R^d} 2g|D^2g|^2 - 2 g |\Delta g|^2 \dx x,
    \end{align*}
which proves the first statement, \eqref{eq:identity-equality}.
To prove \eqref{inequality L4}, we compute
\begin{align*}
    \int_{\R^d} |\nabla g|^4 \dx x &= - \int_{\R^d} g \nabla \cdot(\nabla g |\nabla g|^2) \dx x\\
    &= - \int_{\R^d} g \Delta g |\nabla g|^2 \dx x - \frac 12\int_{\R^d} \nabla g^2 \cdot \nabla |\nabla g|^2 \dx x\\
    &=- \int_{\R^d} g \Delta g |\nabla g|^2 \dx x + \frac 12 \int_{\R^d} g^2 \Delta |\nabla g|^2 \dx x\\
    &=- \int_{\R^d} g \Delta g |\nabla g|^2 \dx x + \int_{\R^d} g^2 \nabla g \cdot \nabla \Delta g \dx x + \int_{\R^d} g^2 |D^2 g|^2 \dx x\\
    &= - 3 \int_{\R^d} g \Delta g |\nabla g|^2 \dx x -  \int_{\R^d} g^2 |\Delta g|^2 +  \int_{\R^d} g^2 |D^2 g|^2\dx x.
\end{align*}
Upon using Young's inequality we obtain
\begin{align*}
     \int_{\R^d} |\nabla g|^4 \dx x & \leq  \prt*{\frac{3C}{2} - 1} \int_{\R^d} g^2 |\Delta g|^2 \dx x + \frac{3}{2C}\int_{\R^d} |\nabla g|^4 \dx x +\int_{\R^d} g^2|D^2 g|^2 \dx x.
    \end{align*}
Setting $C=2$ and rearranging, we find
    \begin{align*}
        \frac14 \int_{\R^d} |\nabla g|^4  \dx x \leq \int_{\R^d} 2 g^2 |\Delta g|^2 \dx x  + \int_{\R^d}  g^2 |D^2 g|^2 \dx x,
    \end{align*}
which yields \eqref{inequality L4}. For the final statement, we observe that
    \begin{align*}
     \int_{\R^d} \Delta h |\nabla g|^2 \dx x
        &= \int_{\R^d} 2 h (\nabla g \cdot \nabla \Delta g)  + 2 h |D^2 g|^2 \dx x\\
        &= \int_{\R^d} - 2   h |\Delta g|^2  - 2   \nabla h \cdot \nabla g \Delta g  +2 h |D^2 g|^2  \dx x\\
        &= \int_{\R^d}- 2 h |\Delta g|^2  + 2 \nabla g \cdot (D^2 h \nabla g +  D^2 g \nabla h) + 2 h |D^2 g|^2 \dx x.
    \end{align*}
Rearranging the terms in the last line, we obtain
    \begin{align*}
       \int_{\R^d} \Delta h |\nabla g|^2 \dx x=  \int_{\R^d} 2h |D^2 g|^2 - 2  h |\Delta g|^2 + 2  \nabla g \cdot D^2 h \nabla g -|\nabla g|^2 \Delta h \dx x,
    \end{align*}
which directly implies \eqref{eq:identity-3}.
\end{proof}

\section{The advection-porous medium equation}\label{sec: pme drift}
 
Now we state an important result that allows us to establish a uniform $L^\infty$-control on the pressure. This uniform bound will be obtained under different assumptions for the uniformly parabolic case $\nu >0$, and it is therefore stated as a different lemma in Section~\ref{sec: active motion}.

\begin{lemma}\label{lemma: Linfty}
    Let $T>0$ and $V=V(x,t) \in L^2(0,T;H^1(\R^d))$ such that 
    \begin{align}
        \|\partial_t V\|_{L^1(0,T;L^\infty(\R^d))} \leq C.
    \end{align}
    Let $p(x,t)$ solve Eq.~\eqref{eq: pressure}. Then, $p \in L^{\infty}(0,T; L^\infty(\R^d))$ uniformly in~$\gamma$.
\end{lemma}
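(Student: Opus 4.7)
The plan is to establish the uniform-in-$\gamma$ $L^{\infty}$-bound via a maximum-principle argument on the shifted unknown $q := p + V$. A direct computation using \eqref{eq: pressure} gives
\begin{equation*}
    \partial_t q = \gamma p \Delta q + \nabla p \cdot \nabla q + \partial_t V,
\end{equation*}
the key structural feature being that on the right-hand side the second-order operator $\gamma p \Delta$ is (degenerate) parabolic with nonnegative coefficient, the drift $\nabla p \cdot \nabla q$ vanishes at spatial extrema of $q$, and the only source $\partial_t V$ is controlled in $L^{1}_{t}L^{\infty}_{x}$ by assumption.

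Formally, at a spatial maximum $x^{\ast}(t)$ of $q(\cdot, t)$ one has $\nabla q(x^{\ast}, t) = 0$ and $\Delta q(x^{\ast}, t) \leq 0$, while $p \geq 0$, so
\begin{equation*}
    \ddt \|q(\cdot, t)\|_{\infty} \leq \|\partial_t V(\cdot, t)\|_{\infty}.
\end{equation*}
Integrating in time and using the hypothesis $\|\partial_t V\|_{L^{1}(0,T;L^{\infty})} \leq C$ together with the initial bound $\|q(\cdot, 0)\|_{\infty} = \|p_{0} + V(\cdot, 0)\|_{\infty} < \infty$ (guaranteed by the standing assumptions $p_{0} \in L^{\infty}$ and $V \in L^{\infty}_{\mathrm{loc}}(0,T;L^{\infty}(\R^{d}))$ of the main theorem) yields
\begin{equation*}
    \|q(\cdot, t)\|_{\infty} \leq \|p_{0} + V(\cdot, 0)\|_{\infty} + C,
\end{equation*}
from which the desired $L^{\infty}$-bound on $p = q - V$ follows, uniformly in $\gamma$.

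To make the formal maximum principle rigorous I would use a Stampacchia-type energy argument: setting $\overline{q}(t) := \|q(\cdot,0)\|_{\infty} + \int_{0}^{t} \|\partial_{s} V(\cdot,s)\|_{\infty}\, ds$ (a spatially constant supersolution) and $w := q - \overline{q}$, the equation $\partial_{t} w = \gamma p \Delta w + \nabla p \cdot \nabla w + s$ holds with $s := \partial_{t} V - \|\partial_{t} V(\cdot,t)\|_{\infty} \leq 0$. Multiplying by $w_{+}$, integrating over $\R^{d}$, and integrating by parts produces a nonpositive contribution $-\int \gamma p |\nabla w_{+}|^{2}$ from the degenerate-parabolic term plus a cross term $-(\gamma - 1)\int w_{+} \nabla p \cdot \nabla w_{+}$ that, upon substituting $\nabla p = \nabla w_{+} - \nabla V$, reduces to quantities absorbable via Gr\"onwall; since $w_{+}(\cdot, 0) = 0$, the conclusion $w_{+} \equiv 0$, i.e., $q \leq \overline{q}$, then follows regardless of any $\gamma$-dependence in the Gr\"onwall constant. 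The main obstacle is precisely the degeneracy of the diffusion coefficient $\gamma p$ on $\{p = 0\}$, which prevents a direct appeal to the classical parabolic maximum principle and forces one into the Stampacchia truncation described (or, alternatively, into a vanishing-viscosity approximation adding $\varepsilon \Delta p$ and passing to $\varepsilon \to 0$).
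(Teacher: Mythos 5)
Your core argument is the same as the paper's: you introduce the shifted unknown $q := p + V$, observe that $\partial_t q = \gamma p\, \Delta q + \nabla p \cdot \nabla q + \partial_t V$, and conclude by a comparison/maximum-principle argument that $\|q(\cdot,t)\|_{\infty}$ grows at most by $\int_0^t \|\partial_s V\|_{\infty}\,ds$. This is exactly the manipulation in the paper's proof, which introduces $f := p + V$ and then simply invokes the comparison principle, so to that extent the proposal is a match.

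Where you go beyond the paper is in sketching a rigorous version via a Stampacchia truncation. The paper itself does not justify the comparison step, so the impulse is good, but the sketch as written does not close under the lemma's stated hypotheses. After splitting $\nabla p = \nabla w_+ - \nabla V$ on $\{w>0\}$, the troublesome piece is $(\gamma-1)\int w_+ \nabla V \cdot \nabla w_+\,dx$; integrating by parts this equals $-\tfrac{\gamma-1}{2}\int (\Delta V)\, w_+^2\,dx$, which can be folded into a Gr\"onwall inequality $\tfrac{d}{dt}\int w_+^2 \le C(\gamma,t)\int w_+^2$ (whereupon $w_+(0)=0$ does give $w_+\equiv 0$, $\gamma$-dependence of the constant notwithstanding, as you correctly point out) \emph{only if} one has $\Delta V(\cdot,t) \in L^\infty(\R^d)$ with $\|\Delta V\|_{L^\infty}\in L^1_{\mathrm{loc}}(0,T)$. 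The lemma, however, assumes only $V\in L^2(0,T;H^1)$ (and $\partial_t V\in L^1_t L^\infty_x$); even the full hypothesis \eqref{eq: assum V} gives $\Delta V$ only in $L^2_{x,t}$, not $L^\infty$. So the claim that the cross term ``reduces to quantities absorbable via Gr\"onwall'' requires more regularity of $V$ than is available. A cleaner route to rigor is the one you mention in passing: regularise both the equation (add $\varepsilon\Delta$ to avoid the degeneracy on $\{p=0\}$) and the potential (mollify $V$ so that $\Delta V\in L^\infty$), run the classical maximum principle on the smooth problem, note the resulting $L^\infty$ bound on $p$ depends only on $\|p_0\|_\infty$, $\|V\|_\infty$, and $\|\partial_t V\|_{L^1_t L^\infty_x}$, and then pass $\varepsilon\to 0$ and undo the mollification; this avoids ever needing pointwise control on $\Delta V$.
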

 \begin{proof}
We argue by applying the comparison principle to the equation satisfied by $f:= p + V$ which reads
\begin{align*}
    \partialt{f} = \nabla p \cdot \nabla f + \gamma p \Delta f + \partialt{V}.
\end{align*}
We have
\begin{equation*}
      \| f(t) \|_{L^\infty(\R^d)}\leq \|f_0\|_{L^\infty(\R^d)} + \left\|\partial_t V\right\|_{L^1(0,T;L^\infty(\R^d))},
\end{equation*}
and therefore
\begin{align*}
    \| p(t) \|_{L^\infty(\R^d)}\leq \|p_0\|_{L^\infty(\R^d)} + \| V(\cdot,0) \|_{L^\infty(\R^d)} - \inf_{x,t} V(x,t) + \left\|\partial_t V\right\|_{L^1(0,T;L^\infty(\R^d))}.
\end{align*}
Thus, we may conclude
\begin{equation*}
  \|p\|_{L^\infty(0,T;L^\infty(\R^d))}  \leq \|p_0\|_{L^\infty(\R^d)} + 2 \|V\|_{L^\infty(0,T;L^\infty(\R^d))}+ \|\partial_t V\|_{L^1(0,T;L^\infty(\R^d))}.
\end{equation*}
 \end{proof}

Let us recall some well-known properties of solutions to Eq.~\eqref{eq: main}, and therefore of the relative pressure that satisfies \eqref{eq: pressure}.
First of all, non-negativity is preserved; $n\geq 0$ almost everywhere in $\R^d\times(0,T)$. The density $n(x,t)$ is also uniformly bounded in $L^\infty(0,T; L^1(\R^d))$; in particular $\|n(t)\|_1= \|n_0\|_1$.
As proven in Lemma~\ref{lemma: Linfty}, the pressure is uniformly bounded in $L^\infty(0,T; L^\infty(\R^d))$ which, together with $p= n p^{(\gamma-1)/\gamma}$, implies $p\in L^\infty(0,T; L^1(\R^d))$ uniformly in $\gamma$.

Exploiting this regularity, a simple computation shows that the pressure gradient is bounded uniformly $L^2(0,T; L^2(\R^d))$. Indeed, by integrating \eqref{eq: pressure} we obtain
\begin{align*}
    \ddt \int_{\R^d} p \dx x &= (1-\gamma) \int_{\R^d} |\nabla p|^2 \dx x + (1-\gamma) \int_{\R^d} \nabla p \cdot \nabla V \dx x\\[0.5em]
    &\leq \frac{1-\gamma}{2}  \int_{\R^d} |\nabla p|^2 \dx x +\frac{\gamma-1}{2}  \int_{\R^d} |\nabla V|^2 \dx x ,
\end{align*}
and thus
\begin{align*}
   \frac{\gamma-1}{2}  \int_0^T\!\!\!\int_{\R^d} |\nabla p|^2 \dx x \dx t\leq \frac{\gamma-1}{2} \int_0^T\!\!\! \int_{\R^d} |\nabla V|^2 \dx x \dx t + \int_{\R^d} p_0 \dx x,
\end{align*}
from which we deduce $\nabla p\in L^2(0,T; L^2(\R^d))$ uniformly in $\gamma$.

\subsection{Proof of Theorem \ref{thm: L4 no diff}}
 
We now proceed to proving the first main result, namely the $L^4$ estimate on the pressure gradient. 
First of all, let us notice that by using \eqref{eq:identity-equality} in Proposition~\ref{prop:identities} it is straightforward to see that the regularity assumption on the potential  $V\in L^\infty(0,T;L^\infty(\R^d))\cap L^2((0,\infty);H^2(\R^d)),$ implies $V \in L^4((0,\infty);W^{1,4}(\R^d))$.

Setting $f:= p + V$, we rewrite \eqref{eq: pressure} as
\begin{equation}
    \label{eq:pressure_rephrased}
    \partialt p = \nabla p \cdot \nabla f + \gamma p \Delta f.
\end{equation}
We multiply Eq. \eqref{eq:pressure_rephrased}  by $-(\Delta f)$ and integrate in space and time to obtain
\begin{align}\label{eq:total}
\begin{split}
    \int_0^T\ddt \int_{\R^d} &\frac{|\nabla p|^2}{2}\dx{x}\dx{t} - \int_0^T\!\!\!\int_{\R^d}  \Delta V \frac{\partial p}{\partial t} \dx{x}\dx{t} \\[0.7em]
    &=\underbrace{-  \int_0^T\!\!\!\int_{\R^d} \nabla p \cdot \nabla f  \Delta f \dx{x}\dx{t} }_{\curlyI} - \gamma  \int_0^T\!\!\!\int_{\R^d}  p |\Delta f |^2\dx{x}\dx{t}. 
\end{split}
\end{align}
Since $p=f-V$ we have
\begin{align*}
    \curlyI=& -  \int_0^T\!\!\!\int_{\R^d} \nabla p \cdot \nabla f \Delta f\dx{x}\dx{t} \\[0.7em]
    =& \underbrace{-  \int_0^T\!\!\!\int_{\R^d}  |\nabla f|^2 \Delta f\dx{x}\dx{t}}_{\curlyI_1}  + \underbrace{ \int_0^T\!\!\!\int_{\R^d}  \nabla V \cdot \nabla f \Delta f \dx{x}\dx{t}}_{\curlyI_2}.
\end{align*}
Thanks to \eqref{eq:identity-equality} we have
\begin{align*}
    \curlyI_1 &=- \int_0^T\!\!\!\int_{\R^d}  |\nabla f|^2 \Delta f\dx{x}\dx{t} \\
    &= \frac{2}{3}  \int_0^T\!\!\!\int_{\R^d} f |\Delta f|^2\dx{x}\dx{t} - \frac{2}{3}  \int_0^T\!\!\!\int_{\R^d}  f \sum_{i,j=1}^d \left|\frac{\partial^2  f}{\partial x_i \partial x_j}\right|^2\dx{x}\dx{t} \\
    	&=\frac{2}{3}  \int_0^T\!\!\!\int_{\R^d} p |\Delta f|^2\dx{x}\dx{t} - \frac{2}{3}  \int_0^T\!\!\!\int_{\R^d} p \sum_{i,j=1}^d \left|\frac{\partial^2  f}{\partial x_i \partial x_j}\right|^2\dx{x}\dx{t}\\
    &\qquad	+\frac{2}{3}  \int_0^T\!\!\!\int_{\R^d} V |\Delta f|^2\dx{x}\dx{t} - \frac{2}{3}  \int_0^T\!\!\!\int_{\R^d} V \sum_{i,j=1}^d \left|\frac{\partial^2  f}{\partial x_i \partial x_j}\right|^2\dx{x}\dx{t}.
\end{align*} 
Using identity Eq.\eqref{eq:identity-3} of Proposition \ref{prop:identities}, the last two terms of the previous line become
\begin{align*}
  \frac{2}{3}  \int_0^T\!\!\!\int_{\R^d} &V |\Delta f|^2 -  V\sum_{i,j=1}^d \left|\frac{\partial^2  f}{\partial x_i \partial x_j}\right|^2\dx{x}\dx{t} \\[0.5em]
  &= \frac 2 3  \int_0^T\!\!\!\int_{\R^d}  ( \nabla f D^2 V  \nabla f - \Delta V |\nabla f|^2) \dx x \dx t\\[0.5em]
  &\leq \varepsilon \|\nabla f\|_4^4 +\frac C \varepsilon \|D^2V\|_2^2 + \frac C \varepsilon \|\Delta V\|_2^2\\[0.5em]
  &\leq \varepsilon \|\nabla f\|_4^4 + C,
\end{align*}
where we used Young's inequality and $D^2 V \in L^2(0,T;L^2(\R^d))$. Therefore, we get
\begin{align}
	\mathcal I_1 \leq \frac23  \int_0^T\!\!\!\int_{\R^d}  p |\Delta f|^2 - p \sum_{i,j=1}^d \abs*{\frac{\partial^2 f}{\partial x_i \partial x_j}}^2 \dx x \dx t + \varepsilon \|\nabla f\|_4^4 + C.
\end{align}

Now we proceed to estimate the term $\curlyI_2$
\begin{align*}
    \curlyI_2 &=  \int_0^T\!\!\!\int_{\R^d}  \nabla V \cdot \nabla f \Delta f \dx{x}\dx{t}\\[0.4em]
    &= -  \int_0^T\!\!\!\int_{\R^d}  \nabla f \cdot D^2V \nabla f \dx x \dx t -  \int_0^T\!\!\!\int_{\R^d}  \nabla V \cdot  D^2f \nabla f \dx{x}\dx{t}\\[0.4em]
    &\leq \frac\varepsilon 2  \int_0^T\!\!\!\int_{\R^d}  |\nabla f|^4 \dx{x}\dx{t}+ \frac{1}{2\varepsilon}  \int_0^T\!\!\!\int_{\R^d}  |D^2V|^2\dx{x}\dx{t}  -  \int_0^T\!\!\!\int_{\R^d}  \nabla V \cdot  D^2f \nabla f \dx{x}\dx{t}\\[0.4em]
    &\leq \frac \varepsilon 2 \|\nabla f\|^4_{4} + C \|D^2V\|^2_{2}  -\frac 1 2  \int_0^T\!\!\!\int_{\R^d}  \nabla V\cdot \nabla|\nabla f|^2 \dx{x}\dx{t}\\[0.4em]
     &= \frac\varepsilon 2 \|\nabla f\|^4_{4} + C  \|D^2V\|^2_{2}  +\frac 1 2  \int_0^T\!\!\!\int_{\R^d}  \Delta V |\nabla f|^2 \dx{x}\dx{t}\\[0.4em]
   &\leq \varepsilon \|\nabla f\|^4_{4} + C,
\end{align*}
where we again used the weighted Young's inequality and $V\in L^2(0,T;H^2(\R^d))$. Therefore, we obtain
\begin{align*}
    \curlyI = \curlyI_1 + \curlyI_2 \leq& \frac{2}{3}  \int_0^T\!\!\!\int_{\R^d} p |\Delta f|^2\dx{x}\dx{t} - \frac{2}{3}  \int_0^T\!\!\!\int_{\R^d} p \sum_{i,j=1}^d \left|\frac{\partial^2  f}{\partial x_i \partial x_j}\right|^2\dx{x}\dx{t} + 2\varepsilon \|\nabla f\|^4_{4} + C.
\end{align*}


Gathering all the bounds we can write Eq. \eqref{eq:total} as
\begin{align*} 
\begin{split}
  \frac12  \|\nabla p(T)\|_2^2 &+  \frac 2 3   \int_0^T\!\!\!\int_{\R^d}  p \sum_{i,j=1}^d\left|\frac{\partial^2 f}{\partial x_i \partial x_j}\right|^2\dx{x}\dx{t} +\prt*{\gamma-\frac 2 3}  \int_0^T\!\!\!\int_{\R^d}   p |\Delta f|^2\dx{x}\dx{t}\\[0.6em]
  &\leq  C+ 2\varepsilon \|\nabla f\|^4_{4} +  \int_0^T\!\!\!\int_{\R^d}  \partial_t{\nabla V}\cdot \nabla p \dx x \dx t + \frac12  \|\nabla p_0\|_2^2\\[0.6em]
  &\leq C+ C \varepsilon \|\nabla p\|^4_{4} + \frac12 \|\nabla p_0\|_2^2,
  \end{split}
\end{align*}
where we used Young's inequality with exponents $4$ and $4/3$, $\nabla V \in L^4$ and the $L^{4/3}$-bound of $\partial_t \nabla V$ assumed in \eqref{eq: assum V}. Thus, we have proven the following bound
\begin{equation}
	\label{eq:estimate-pD2f_squared}
 \begin{aligned}
\frac 12& \|\nabla p(T)\|_{L^2}^2 +	\frac 2 3  \int_0^T\!\!\!\int_{\R^d}  p \sum_{i,j=1}^d\left|\frac{\partial^2 f}{\partial x_i \partial x_j}\right|^2\dx{x}\dx{t} +\left(\gamma-\frac 2 3\right)  \int_0^T\!\!\!\int_{\R^d}  p |\Delta f|^2 \dx{x}\dx{t}\\[0.8em]
&\qquad\qquad\qquad\qquad\leq C+ C\varepsilon \|\nabla p\|^4_{4},
\end{aligned}
\end{equation}
where we used the assumption $p_0\in H^1(\R^d)$.
Using \eqref{eq:estimate-pD2f_squared} in conjunction with the uniform bounds on $D^2 V$ in $L^2(0,T;L^2(\R^d))$ and $p \in L^\infty(0,T;L^\infty(\R^d))$, we have
\begin{align}
\label{eq: pd2p}
\begin{split}
     \int_0^T\!\!\!\int_{\R^d}  p \sum_{i,j=1}^d \left|\frac{\partial^2 p}{\partial x_i \partial x_j}\right|^2 \dx{x}\dx{t}&\leq 2 \int_0^T\!\!\!\int_{\R^d}  p \sum_{i,j=1}^d\left|\frac{\partial^2 f}{\partial x_i \partial x_j}\right|^2\dx{x}\dx{t}\\
     &\qquad+2 \int_0^T\!\!\!\int_{\R^d}  p \sum_{i,j=1}^d \left|\frac{\partial^2V}{\partial x_i \partial x_j}\right|^2\dx{x}\dx{t}\\[0.5em]
    &\leq C + \varepsilon \|\nabla p\|^4_{4},
\end{split}
\end{align}
as well as
\begin{align}
\label{eq: pLapp}
\begin{split}
     \int_0^T\!\!\!\int_{\R^d}  p \left|\Delta p \right|^2 \dx{x}\dx{t}&\leq 2 \int_0^T\!\!\!\int_{\R^d}  p \left|\Delta f\right|^2\dx{x}\dx{t} +2 \int_0^T\!\!\!\int_{\R^d}  p \left|\Delta V\right|^2\dx{x}\dx{t}\\[0.7em]
    &\leq C + C\varepsilon \|\nabla p\|^4_{4}.
\end{split}
\end{align}

Using the uniform $L^\infty$-bound on $p$ in \eqref{inequality L4} in Proposition \ref{prop:identities}, we obtain
\begin{equation*}
	 \int_0^T\!\!\!\int_{\R^d}  |\nabla p|^4\dx{x} \dx t \leq C \int_0^T\!\!\!\int_{\R^d}  p |\Delta p|^2 \dx{x}\dx t +  C \int_0^T\!\!\!\int_{\R^d}  p \sum_{i,j=1}^d \left|\frac{\partial^2  p}{\partial x_i \partial x_j}\right|^2\dx{x} \dx t,
\end{equation*}
where the right-hand side is controlled by (\ref{eq: pd2p}, \ref{eq: pLapp}). We conclude 
\begin{equation*}
	\|\nabla p\|_{4}^4 \leq C\varepsilon \|\nabla p\|^4_{4} + C,
\end{equation*}
which completes the proof of the uniform $L^4$-bound of the gradient if $\varepsilon>0$ is sufficiently small. Finally, \eqref{eq:estimate-pD2f_squared} yields $p\in L^\infty(0,T; H^1(\R^d))$, and this concludes the proof.

\section{Including active motion}\label{sec: active motion}

Let us now consider \eqref{eq: main-active-motion} and \eqref{eq: pressure-active-motion}.
Defining $\Sigma$ as in \eqref{eq:defn-Sigma},  \eqref{eq: main-active-motion} becomes
\begin{align}\label{eq: main active motion NEW}
    \partialt n = \Delta \Sigma + \nabla\cdot(n\nabla V).
\end{align}
The proof of the main result heavily depends on integration by parts. Since, in this case, solutions are no longer compactly supported as in the degenerate case,  we prove the result on the torus $\T^d$ for the ease of exposition.

Before proving the main result, Theorem~\ref{thm: L4 diff}, let us state the following lemma which gives the uniform control of the $L^\infty$-norm of the pressure. 

\begin{lemma}[$\norm{p}_{L^\infty_{x,t}} \leq C_*$]\label{lemma Cs}
Let $p$ satisfy \eqref{eq: pressure} with a potential $ V$ such that $\norm{ V}_{L^\infty(0,T; L^\infty(\T^d))},$ $\norm{\nu \Delta V+\partial_t V}_{L^1(0,T; L^\infty(\T^d))}$ are bounded. There exists a constant, $C_*>0$ such that
\begin{align}
    \norm{p}_{L^\infty_{x,t}} \leq C_*, \quad \norm{n}_{L^\infty_{x,t}} \leq C_*^{1/\gamma},
\end{align} 
where $n=p^{1/\gamma}$.
\end{lemma}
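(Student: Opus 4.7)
The plan is to mirror the argument of Lemma~\ref{lemma: Linfty}, applying a pointwise maximum principle to the auxiliary quantity $f := p + V$, now exploiting the extra linear diffusion $\nu\Delta p$ together with the favourable (non-positive) sign of the term proportional to $|\nabla p|^2/p$ appearing in \eqref{eq: pressure-active-motion}. First I would rewrite the pressure equation, using $\Delta p = \Delta f - \Delta V$, in the form
\begin{equation*}
\partial_t f = (\gamma p + \nu)\,\Delta f + \nabla p \cdot \nabla f - \nu\,\tfrac{\gamma}{\gamma-1}\,\tfrac{|\nabla p|^2}{p} + \bigl(\partial_t V - \nu \Delta V\bigr),
\end{equation*}
so that the second-order operator is now \emph{uniformly} parabolic (with coefficient bounded below by $\nu>0$), in contrast to the degenerate situation of Section~\ref{sec: pme drift}.

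Evaluating at a space-time point $(x^\ast(t),t)$ at which $f(\cdot,t)$ attains its spatial maximum on $\T^d$, we have $\nabla f(x^\ast,t)=0$ and $\Delta f(x^\ast,t)\le 0$; the first right-hand term is therefore non-positive, the second vanishes, and the third (apparently singular) term is also non-positive and may be discarded. Hence
\begin{equation*}
\ddt \max_{\T^d} f(\cdot,t) \le \bigl\|(\partial_t V - \nu\Delta V)(\cdot,t)\bigr\|_{L^\infty(\T^d)}.
\end{equation*}
Integrating in time and using the $L^1_t L^\infty_x$-control provided by the hypothesis on $\nu\Delta V + \partial_t V$ (which, read with the appropriate sign, yields the required bound on the source), we obtain $\max f(t)\le \max f(0)+C$; combined with $\|p\|_{L^\infty_{x,t}} \le \|f\|_{L^\infty_{x,t}} + \|V\|_{L^\infty_{x,t}}$, this produces a constant $C_\ast$ independent of $\gamma$. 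The companion estimate $\|n\|_{L^\infty_{x,t}}\le C_\ast^{1/\gamma}$ is then immediate from the pressure law $n=p^{1/\gamma}$ and the non-negativity of $p$.

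The main obstacle is the rigorous justification of the pointwise maximum-principle step, since solutions of the quasilinear equation \eqref{eq: main-active-motion} are only weak and the coefficient $|\nabla p|^2/p$ is singular on $\{p=0\}$. I would handle this in the standard way: regularise the degenerate coefficient (e.g.\ by replacing $\gamma p$ with $\gamma (p+\delta)$) and mollify the initial datum and the potential, apply the classical parabolic maximum principle to the resulting smooth solutions, and pass to the limit in $\delta$ and in the mollification parameters. Since the possibly singular term $|\nabla p|^2/p$ carries the right sign and is simply dropped in the inequality, it plays no effective role in the estimate and causes no genuine difficulty in the limiting procedure.
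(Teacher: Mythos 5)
Your proposal follows essentially the same route as the paper's proof: introduce $f=p+V$, rewrite the pressure equation so the operator $(\gamma p+\nu)\Delta$ appears, discard the non-positive term proportional to $|\nabla p|^2/p$, apply the comparison/maximum principle to bound $\max_x f$, and deduce the bound on $n=p^{1/\gamma}$; the regularisation you sketch merely makes explicit what the paper leaves implicit when it invokes the comparison principle. The only point to flag is the zero-order source: your algebra correctly yields $\partial_t V-\nu\Delta V$, whereas the paper's proof and hypothesis are phrased in terms of $\nu\Delta V+\partial_t V$ (a sign slip internal to the paper), so your parenthetical ``read with the appropriate sign'' inherits that discrepancy rather than introducing a new gap.
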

\begin{proof}
Consider the equation for the pressure
\begin{align*}
        \partial_t p = \nabla p \cdot \nabla (p + V) + \gamma p (\Delta p + \Delta V) - \nu \frac{\gamma - 1}{\gamma} \frac{|\nabla p|^2}{p} + \nu \Delta p.
\end{align*}
Setting $f:= p + V$, we get
\begin{align*}
        \partial_t f &= \nabla p \cdot \nabla f + \gamma p \Delta f - \nu \frac{\gamma - 1}{\gamma} \frac{|\nabla p|^2}{p} + \nu \Delta f - \nu \Delta V  -\partial_t V\\[0.5em]
        &\leq  \nabla p \cdot \nabla f + \gamma p \Delta f + \nu \Delta f - \nu \Delta  V  -\partial_t  V.
\end{align*}
Arguing as in the proof of Lemma~\ref{lemma: Linfty}, by the comparison principle, we have 
\begin{align*}
    \|f(t)\|_{L^\infty(\T^d)} \leq \| f(0)\|_{L^\infty(\T^d)} + \norm{\nu \Delta V+\partial_t  V}_{L^1(0,T; L^\infty(\T^d))}.
\end{align*}
Then, we conclude
\begin{align*}
    \| p(t)\|_{L^\infty(\T^d)} 
    &\leq \|f(0)\|_{L^\infty(\T^d)} + \norm{\nu \Delta  V+\partial_t  V}_{L^1(0,T; L^\infty(\T^d))} - \min_{x,t}  V\\[0.5em]
    &\leq C(\norm{p_0}_{L^\infty}, \norm{ V}_{L^\infty(0,T; L^\infty(\T^d))}, \norm{\nu \Delta  V+\partial_t  V}_{L^1(0,T; L^\infty(\T^d))}) =: C_*.
\end{align*}
Since $n = p^{1/\gamma}$, we get the second bound.
\end{proof}

\subsection{Proof of Theorem~\ref{thm: L4 diff} -- uniform results in $\gamma$}

It is easy to see that $\Sigma$ satisfies the following equation
 \begin{align}\label{eq: new}
    \partial_t \Sigma = (\nu + \gamma p) (\Delta \Sigma + n \Delta  V) + \nabla \Sigma \cdot \nabla  V.
\end{align}
We multiply \eqref{eq: new} by $-(\Delta \Sigma + n  V)$ and get
\begin{align*}
    \frac12
    &\int_{\T^d} |\nabla \Sigma|^2(T) \dx x + (\nu + \gamma p) \int_0^T\!\!\int_{\T^d} |\Delta \Sigma + n \Delta  V|^2 \dx x \dx t \\[0.5em]
    &= \int_0^T\!\!\int_{\T^d} n \partial_t \Sigma \Delta  V \dx x \dx t - \int_0^T\!\!\int_{\T^d} \nabla \Sigma \cdot \nabla  V (\Delta \Sigma + n \Delta  V) \dx x \dx t+ \frac12 \int_{\T^d} |\nabla \Sigma_0|^2\dx x.
\end{align*}
Setting $S'(s) := s \Sigma'(s)$, we get
\begin{equation}
    \label{eq:DissipGradSigma}
    \begin{split}
    \frac12
    \int_{\T^d} |\nabla \Sigma|^2(T) \dx x + \frac\nu2 \int_0^T\!\!\int_{\T^d} |\Delta \Sigma + n \Delta  V|^2 \dx x \dx t &\leq  \frac{1}{2\nu} \int_0^T\!\!\int_{\T^d} |\nabla \Sigma \cdot \nabla  V|^2 \dx x \dx t\\[0.5em]
    &\qquad +\mathcal J + C,
    \end{split}
\end{equation}
where $C>0$ depends on $\|\nabla \Sigma_0\|_{L^2}$ and
\begin{align*}
    \mathcal J := \int_0^T\!\!\int_{\T^d} \partial_t S \Delta  V \dx x \dx t. 
\end{align*}
\paragraph{Treatment of $\mathcal J$.} By the definition of $S$, we have
\begin{align*}
    \mathcal J &= - \int_0^T\!\!\int_{\T^d} \Delta S \partial_t  V \dx x \dx t\\[0.5em]
    &= - \int_0^T\!\!\int_{\T^d} n \Delta \Sigma \partial_t  V \dx x \dx t - \int_0^T\!\!\int_{\T^d} \frac{|\nabla \Sigma|^2}{\Sigma'} \partial_t  V \dx x \dx t,
\end{align*}
having used $\Delta S = n \Delta \Sigma + |\nabla \Sigma|^2 / \Sigma'$. Then,
\begin{align*}
    \mathcal J&=- \int_0^T\!\!\int_{\T^d} n (\Delta \Sigma + n \Delta  V) \partial_t  V \dx x \dx t + \int_0^T\!\!\int_{\T^d} n^2 \Delta  V  \partial_t  V \dx x \dx t - \int_0^T\!\!\int_{\T^d} \frac{|\nabla \Sigma|}{\Sigma'} \partial_t  V \dx x \dx t\\[0.5em]
    &\leq \frac\alpha2 \int_0^T\!\!\int_{\T^d} |\Delta \Sigma + n \Delta  V|^2 \dx x \dx t + \frac{1}{2\alpha} \int_0^T\!\!\int_{\T^d} n^2 |\partial_t  V|^2 \dx x \dx t + \norm{n}_{L^\infty}^2 \norm{\Delta  V}_{L^2} \norm{\partial_t  V}_{L^2}\\[0.3em]
    &\qquad + \frac1\nu \int_0^T\!\!\int_{\T^d} |\nabla \Sigma|^2 |\partial_t  V| \dx x \dx t\\[0.5em]
    &\leq \frac\nu4 \int_0^T\!\!\int_{\T^d} |\Delta \Sigma +  n \Delta  V|^2 \dx x \dx t + \frac1\nu \norm{n}_{L^\infty}^2 \norm{\partial_t V}_{L^2}^2 + \norm{n}_{L^\infty}^2 \norm{\Delta  V}_{L^2} \norm{\partial_t  V}_{L^2}\\[0.3em]
    & \qquad + \frac1\nu \int_0^T\!\!\int_{\T^d} |\nabla \Sigma|^2 |\partial_t  V| \dx x \dx t.
\end{align*}
Thus, revisiting \eqref{eq:DissipGradSigma}, we get
\begin{align}
    \label{eq:dissipgradsigmarevisited2}
    \begin{split}
    \frac12 
    \int_{\T^d} |\nabla \Sigma|^2(T)& \dx x + \frac\nu4 \int_0^T\!\!\int_{\T^d} |\Delta \Sigma + n \Delta V|^2 \dx x \dx t \\[0.5em]
    &\leq
    \frac1\nu \norm{n}_{L^\infty}^2 \norm{\partial_t  V}_{L^2}^2 + \norm{n}_{L^\infty}^2 \norm{\Delta  V}_{L^2} \norm{\partial_t  V}_{L^2}+C\\[0.3em]
    &\qquad+ \frac1\nu \int_0^T\!\!\int_{\T^d} |\nabla \Sigma|^2 |\partial_t  V| \dx x \dx t +  \frac{1}{2\nu} \int_0^T\!\!\int_{\T^d} |\nabla \Sigma \cdot \nabla  V|^2 \dx x \dx t\\[0.5em]
    &\leq  
     C_*^{\frac 2\gamma} \left(\frac1\nu\norm{\partial_t  V}_{L^2}^2 + \norm{\Delta  V}_{L^2} \norm{\partial_t  V}_{L^2}\right) + C\\[0.3em] 
     &\qquad + \frac1\nu \int_0^T\!\!\int_{\T^d} |\nabla \Sigma|^2 |\partial_t  V| \dx x \dx t +  \frac{1}{2\nu} \int_0^T\!\!\int_{\T^d} |\nabla \Sigma \cdot \nabla  V|^2 \dx x \dx t.
    \end{split}
\end{align}
Let us recall that
\begin{equation*}
    \int_{\T^d} |\nabla \Sigma|^4 \dx x \leq C \|\Sigma\|_\infty^2 \int_{\T^d} |\Delta \Sigma|^2 \dx x. 
\end{equation*}
Since $n$ is uniformly bounded in $L^\infty$, using Young's inequality on the last two integrals on the right-hand side of \eqref{eq:dissipgradsigmarevisited2}, we find
\begin{align*}
    \begin{split}
    \frac12 
    \int_{\T^d} |\nabla \Sigma|^2(T) \dx x + \frac\nu4 \int_0^T\!\!\int_{\T^d} |\Delta \Sigma|^2 \dx x \dx t &\leq C +\varepsilon \int_0^T\!\!\int_{\T^d} |\nabla \Sigma|^4 \dx x \dx t \\[0.5em]
    &\leq C +\varepsilon C \int_0^T\!\!\int_{\T^d} |\Delta \Sigma|^2  \dx x \dx t ,
    \end{split}
\end{align*}
where $C=C(C_*, \norm{\partial_t  V}_{L^2}, \norm{\Delta  V}_{L^2}, \|\nabla V\|_{L^4}, \varepsilon)$. Choosing $\varepsilon$ small enough yields the bound.

\subsection{Proof of Theorem~\ref{thm:convergence} -- convergence to the limit}
\label{sec: convergence}

To prove Theorem~\ref{thm: L4 diff infty}, namely the estimates for the limit $\Sigma_\infty$ under relaxed assumptions on $V$, we will proceed from \eqref{eq:dissipgradsigmarevisited2}. In particular, instead of bounding the last two integrals, we aim at passing to the limit as $\gamma\to \infty$. In this section, we prove that the sequence $\nabla \Sigma_\gamma$ is strongly compact. This holds under stronger assumptions on the potential, see \eqref{assum V: convergence}. However, since the final estimate does not depend on such assumptions, we will later remove them in Section~\ref{sec: active motion infinity}.

\paragraph{Sequence of regularised potentials.}
Let $V$ satisfy \eqref{assum V for p_infty}. We take a sequence of smooth potentials $V_\delta:\T^d\times[0,T]\to \T^d$ such that, 
as $\delta\to 0$, we have
\begin{align*}
    \Delta  V_\delta \to \Delta V, \quad &\text{strongly in } L^2(0,T;L^2(\T^d)),\\[0.5em] 
    \partial_t V_\delta \to \partial_t V, \quad  &\text{strongly in } L^2(0,T;L^2(\T^d)),\\[0.5em] 
    \nabla V_\delta \to \nabla V, \quad  &\text{strongly in } L^4(0,T;L^4(\T^d)).
\end{align*} 
In particular, we have
\begin{equation}\label{assum V: convergence}
    \partial_t V_\delta \in L^\infty(0,T;L^\infty(\T^d)), \; \nabla V_\delta \in  L^\infty(0,T;L^\infty(\T^d)).
\end{equation}

\paragraph{A priori estimates and strong compactness.}
We will now consider the solution of Eq.~\eqref{eq: main active motion NEW} with potential $V_\delta$ (which for the sake of readability will simply be denoted by $\tilde V$). Our goal is to show the strong convergence of $\Sigma_\gamma$ to $\Sigma_\infty$ in $L^2(0,T;H^1(\T^d))$. To this end, we introduce the index $\gamma$ to indicate the sequence (or the not relabelled subsequences) of solutions. The result follows from the uniform \textit{a priori } estimates proved in the following lemmata.

\begin{lemma}[$\nabla \Sigma_\gamma$ is uniformly bounded in $L^2$]\label{lemma nabla sigma}
There exists $C>0$ independent of $\gamma$ such that
\begin{equation}
    \|\nabla \Sigma_\gamma\|_{L^2(0,T; L^2(\T^d)}\leq C.
\end{equation}
\end{lemma}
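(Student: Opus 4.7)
The plan is to derive the bound via the natural energy inequality associated with \eqref{eq: main active motion NEW}, by testing the equation against $\Sigma_\gamma$ itself. This choice is dictated by the structure of the diffusion term: integration by parts turns $\int_{\T^d} \Sigma_\gamma \Delta \Sigma_\gamma\,\dx x$ into $-\int_{\T^d} |\nabla \Sigma_\gamma|^2\,\dx x$, which is exactly the target quantity. To rewrite the time derivative as an exact one, I would introduce the primitive $\Phi(s):=\int_0^s \sigma(\tau)\,\dx\tau$, namely
\begin{equation*}
    \Phi(s) = \frac{\gamma}{(\gamma+1)(\gamma+2)} s^{\gamma+2} + \frac{\nu}{2} s^2,
\end{equation*}
so that $\Sigma_\gamma\,\partial_t n_\gamma = \partial_t \Phi(n_\gamma)$.

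Integrating in space and in time then yields the identity
\begin{equation*}
    \int_{\T^d} \Phi(n_\gamma(T))\,\dx x + \int_0^T\!\!\int_{\T^d} |\nabla \Sigma_\gamma|^2 \,\dx x \dx t = \int_{\T^d} \Phi(n_0)\,\dx x - \int_0^T\!\!\int_{\T^d} n_\gamma \nabla \tilde V \cdot \nabla \Sigma_\gamma \,\dx x \dx t,
\end{equation*}
after which Young's inequality applied to the drift term absorbs half of the dissipation. Dropping the nonnegative term $\int_{\T^d} \Phi(n_\gamma(T))\,\dx x$ on the left leaves
\begin{equation*}
    \tfrac{1}{2}\int_0^T\!\!\int_{\T^d}|\nabla \Sigma_\gamma|^2 \,\dx x \dx t \leq \int_{\T^d} \Phi(n_0)\,\dx x + \tfrac{1}{2}\int_0^T\!\!\int_{\T^d} n_\gamma^2\, |\nabla \tilde V|^2 \,\dx x \dx t.
\end{equation*}

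It then remains to show that both right-hand terms are bounded uniformly in $\gamma$. For the drift contribution this is immediate: Lemma~\ref{lemma Cs} supplies $\|n_\gamma\|_{L^\infty_{x,t}}\leq C_*^{1/\gamma}$, uniformly bounded in $\gamma$, while the regularity of $\tilde V = V_\delta$ coming from \eqref{assum V: convergence} ensures $\nabla \tilde V \in L^2(0,T;L^2(\T^d))$. The only delicate point I anticipate is the initial energy: the quadratic piece $\tfrac{\nu}{2}\|n_0\|_{L^2}^2$ is harmless under \eqref{eq: assump id diffusion}, but the high-power term naively scales like $\|n_0\|_{L^\infty}^{\gamma+2}$ and would a priori blow up with $\gamma$. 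I would bypass this by rewriting $n_0^{\gamma+2}=p_0\,n_0^2$, so that $\int_{\T^d} n_0^{\gamma+2}\,\dx x \leq \|p_0\|_{L^\infty}\|n_0\|_{L^\infty}\|n_0\|_{L^1}$, a bound independent of $\gamma$ (moreover the prefactor $\gamma/[(\gamma+1)(\gamma+2)]$ even decays). This closes the estimate and yields the claimed uniform $L^2$-bound on $\nabla \Sigma_\gamma$.
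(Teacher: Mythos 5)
Your proposal is correct and follows essentially the same route as the paper: testing \eqref{eq: main active motion NEW} with $\Sigma_\gamma$, recognising $\Sigma_\gamma\,\partial_t n_\gamma$ as the exact time derivative of $\tfrac{\nu}{2}n_\gamma^2+\tfrac{\gamma}{(\gamma+1)(\gamma+2)}n_\gamma^{\gamma+2}$, absorbing the drift term by Young's inequality together with the uniform $L^\infty$-bound on $n_\gamma$ from Lemma~\ref{lemma Cs}. Your explicit remark that $\int_{\T^d} n_0^{\gamma+2}\,\dx x = \int_{\T^d} p_0\, n_0^2\,\dx x$ is bounded uniformly in $\gamma$ is a welcome clarification of a point the paper leaves implicit, but it does not change the argument.
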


\begin{proof}
We recall the equation for $n_\gamma$
\begin{align*}
        \partial_t n_\gamma = \Delta \Sigma_\gamma + \nabla \cdot (n_\gamma \nabla \tilde V).
\end{align*}
Multiplying by $\Sigma_\gamma$ and integrating in space, we get
\begin{align*}
        \int_{\T^d} \Sigma_\gamma \partial_t n_\gamma \dx x =  -\int_{\T^d}  |\nabla \Sigma_\gamma|^2 \dx x - \int_{\T^d} n_\gamma \nabla \tilde V \cdot \nabla \Sigma_\gamma \dx x.
\end{align*}
The left-hand side is an exact derivative
\begin{align*}
        \prt*{\nu n_\gamma + \frac{\gamma}{\gamma + 1}n^{\gamma +1 }} \partial_t n_\gamma = \partialt{} \left(\frac\nu2 n_\gamma^2 + \frac{\gamma}{(\gamma + 1)(\gamma + 2)}n_\gamma^{\gamma + 2}\right).
\end{align*}
Using Young's inequality on the right-hand side, we get
\begin{align*}
        \ddt \int_{\T^d} \frac\nu2 n_\gamma^2 + \frac{\gamma}{(\gamma + 1)(\gamma + 2)}n_\gamma^{\gamma + 2} \dx x  + \frac12 \int_{\T^d} |\nabla \Sigma_\gamma|^2 \dx x \leq \frac12 \norm{\nabla \tilde V}_{L^2}^2 \norm{ n_\gamma}_{L^\infty}^2.
\end{align*}   
Integrating in time, we get
\begin{align*}
        \frac\nu2 \int_{\T^d} n_\gamma^2(t) \dx x + \frac12\int_0^T\!\!\int_{\T^d} |\nabla \Sigma_\gamma|^2 \dx x \dx t &\leq \norm{\nabla \tilde V}_{L^2}^2 T C_*^{2/\gamma} + \frac\nu2 \int_{\T^d} n_\gamma^2(0)\dx x\\[0.3em]
        &\qquad + \frac{\gamma}{(\gamma + 1)(\gamma + 2)} \int_{\T^d} n_\gamma^{\gamma + 2}(0) \dx x,
\end{align*}
which concludes the proof.
\end{proof} 

\begin{lemma}[Boundedness of $\Delta \Sigma_\gamma$ in $L^2$]
\label{lemma delta sigma}
There exists $C>0$ independent of $\gamma$, such that
\begin{equation*}
    \|\Delta \Sigma_\gamma\|_{L^2(0,T;L^2(\T^d))} \leq C.
\end{equation*} 
Thus, we have
\begin{equation*}
    \Delta \Sigma_\gamma\rightharpoonup \Delta \Sigma_\infty, \quad\text{weakly in } L^2(0,T;L^2(\T^d)).
\end{equation*}
\end{lemma}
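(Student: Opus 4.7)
The strategy is to reuse the closure step from the proof of Theorem~\ref{thm: L4 diff}, applied to the regularized potential $\tilde V = V_\delta$. Inequality \eqref{eq:dissipgradsigmarevisited2} was derived only under those assumptions on the driving potential that are implied by \eqref{assum V: convergence}; hence the same chain of identities applies verbatim with $\tilde V$ in place of $V$. All norms of $\tilde V$ that appear on the right-hand side, namely $\|\partial_t\tilde V\|_{L^2}$, $\|\Delta \tilde V\|_{L^2}$, and $\|\nabla \tilde V\|_{L^4}$, are finite by construction of the regularization, while $\|n_\gamma\|_{L^\infty_{x,t}} \leq C_*^{1/\gamma}$ by Lemma~\ref{lemma Cs}. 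Thus the right-hand side of \eqref{eq:dissipgradsigmarevisited2} is bounded independently of $\gamma$.

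To extract $\|\Delta \Sigma_\gamma\|_{L^2}$ from the quantity $\|\Delta \Sigma_\gamma + n_\gamma \Delta \tilde V\|_{L^2}^2$ appearing on the left-hand side, I would use the elementary bound $|\Delta \Sigma_\gamma|^2 \leq 2|\Delta \Sigma_\gamma + n_\gamma \Delta \tilde V|^2 + 2|n_\gamma|^2|\Delta \tilde V|^2$ and absorb the extra term into a constant via the uniform $L^\infty$ bound on $n_\gamma$ and $\tilde V \in L^2H^2$. The resulting expression contains a term proportional to $\|\nabla \Sigma_\gamma\|_{L^4}^4$, which I would handle through the torus analogue of \eqref{inequality L4} in Proposition~\ref{prop:identities}, giving $\|\nabla \Sigma_\gamma\|_{L^4}^4 \leq C\|\Sigma_\gamma\|_{L^\infty}^2 \|\Delta \Sigma_\gamma\|_{L^2}^2$ (the boundary terms in the integration by parts vanish on $\T^d$). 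Since $\|\Sigma_\gamma\|_{L^\infty} \leq \nu C_*^{1/\gamma} + \tfrac{\gamma}{\gamma+1}C_*^{1+1/\gamma}$ is uniformly bounded in $\gamma$, picking the coupling constant (the $\varepsilon$ of the ambient Young inequality) small enough closes the estimate and yields the desired uniform bound on $\|\Delta \Sigma_\gamma\|_{L^2(0,T;L^2(\T^d))}$.

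For the weak convergence statement, combining this bound with Lemma~\ref{lemma nabla sigma} and the uniform $L^\infty$ control on $\Sigma_\gamma$ gives $\{\Sigma_\gamma\}$ bounded in $L^2(0,T;H^2(\T^d))$. Weak compactness of this reflexive space produces a subsequence along which $\Delta \Sigma_\gamma \rightharpoonup w$ in $L^2(0,T;L^2(\T^d))$ for some $w$. Identification $w = \Delta \Sigma_\infty$ follows by testing against an arbitrary smooth $\varphi$ and passing to the distributional limit in $\int_0^T\!\!\int_{\T^d} \Sigma_\gamma \Delta \varphi\,\dx x\,\dx t = \int_0^T\!\!\int_{\T^d} \Delta \Sigma_\gamma\, \varphi\,\dx x\,\dx t$, using the weak convergence $\Sigma_\gamma \rightharpoonup \Sigma_\infty$ in $L^2(0,T;H^1(\T^d))$ that will be established in the remainder of the proof of Theorem~\ref{thm:convergence} (via strong compactness of $n_\gamma$).

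The only mild difficulty is ensuring that the absorption step goes through cleanly: the coefficient of $\|\Delta \Sigma_\gamma\|_{L^2}^2$ produced by the Ladyzhenskaya-type inequality must be strictly smaller than the coefficient on the left-hand side, uniformly in $\gamma$ (and, later, in $\delta$). Since $\|\Sigma_\gamma\|_{L^\infty}$ is controlled by a constant depending only on $C_*$, which by Lemma~\ref{lemma Cs} depends only on $\|p_0\|_{L^\infty}$, $\|V\|_{L^\infty}$ and $\|\nu\Delta V + \partial_t V\|_{L^1_t L^\infty_x}$, this amounts to a one-time choice of a small parameter in Young's inequality and is not a genuine obstruction.
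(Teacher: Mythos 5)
Your argument is correct, but it closes the estimate differently from the paper. You re-run the $\varepsilon$-absorption machinery of Theorem~\ref{thm: L4 diff}: bound the last two integrals in \eqref{eq:dissipgradsigmarevisited2} by Young's inequality against $\|\partial_t\tilde V\|_{L^2}^2$ and $\|\nabla\tilde V\|_{L^4}^4$ plus $\varepsilon\|\nabla\Sigma_\gamma\|_{L^4}^4$, then use $\int_{\T^d}|\nabla\Sigma_\gamma|^4\dx x\le C\|\Sigma_\gamma\|_\infty^2\int_{\T^d}|\Delta\Sigma_\gamma|^2\dx x$ and absorb into the left-hand side (indeed, the first claim of the lemma could even be quoted directly from Theorem~\ref{thm: L4 diff} applied with the potential $\tilde V$, which satisfies \eqref{eq: assum V diffusion} on $(0,T)$). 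The paper instead exploits the extra regularity granted by the regularisation, $\partial_t\tilde V,\nabla\tilde V\in L^\infty_{x,t}$ from \eqref{assum V: convergence}, to bound those same two integrals by $\frac1\nu\|\partial_t\tilde V\|_{L^\infty}\|\nabla\Sigma_\gamma\|_{L^2}^2$ and $\frac{1}{2\nu}\|\nabla\tilde V\|_{L^\infty}^2\|\nabla\Sigma_\gamma\|_{L^2}^2$, which are finite uniformly in $\gamma$ by Lemma~\ref{lemma nabla sigma}; no absorption or Ladyzhenskaya step is needed, and the bound on $\|\Delta\Sigma_\gamma+n_\gamma\Delta\tilde V\|_{L^2}$ is recorded along the way. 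Your route is slightly longer but has the advantage of not using the $L^\infty$ bounds of \eqref{assum V: convergence} at all, so the resulting constant is also uniform in $\delta$ (a uniformity the paper only recovers later, in Section~\ref{sec: active motion infinity}, by lower semicontinuity); the paper's route is shorter given that the regularised potential is available anyway. Your treatment of the weak convergence statement (weak compactness in $L^2(0,T;H^2(\T^d))$ and identification of the limit against test functions using $\Sigma_\gamma\rightharpoonup\Sigma_\infty$ in $L^2(0,T;H^1(\T^d))$) matches what the paper leaves implicit and is fine.
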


\begin{proof}
From \eqref{eq:dissipgradsigmarevisited2} we have
\begin{align*}
    \frac 12 \int_{\T^d} |\nabla \Sigma_\gamma(T)|^2 \dx x &+ \frac \nu 4 \int_0^T\int_{\T^d}|\Delta \Sigma_\gamma +n_\gamma \Delta \tilde V|^2 \dx x \dx t\\[0.3em]
    &\leq  \frac 1 \nu C_*^{\frac 2\gamma} \|\partial_t \tilde V\|^2_{L^2}  + C_*^{\frac 2\gamma}\|\Delta \tilde V\|_{L^2}\|\partial_t\tilde V\|_{L^2}+ \frac 1 \nu \|\nabla\Sigma_\gamma\|_{L^2}^2\|\partial_t \tilde V\|_{L^\infty}\\[0.3em]
    &\qquad\qquad+ \frac{1}{2\nu} \|\nabla \tilde V\|^2_{L^\infty} \|\nabla \Sigma_\gamma\|^2_{L^2}\\[0.5em]
    &\leq C,
\end{align*}
where $C>0$ is independent of $\gamma$ by Lemma~\ref{lemma nabla sigma}. 
Thus, we have (uniformly in $\gamma$)
\begin{equation*}
       \|\Delta \Sigma_\gamma + n_\gamma \Delta \tilde  V\|^2_{L^2}\leq C.
\end{equation*}
Using the uniform boundedness of $n_\gamma$, and $\Delta \tilde V\in L^2(0,T;L^2(\T^d))$, we conclude.
\end{proof}

\begin{lemma}[$\nabla p_\gamma$ is uniformly bounded in $L^2$]
\label{lemma nabla p}
There exists $C>0$ independent of $\gamma$, such that
\begin{equation}
    \|\nabla p_\gamma\|_{L^2(0,T;L^2(\T^d))} \leq C.
\end{equation}
\end{lemma}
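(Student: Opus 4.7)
The natural strategy is to derive the bound directly from the pressure equation \eqref{eq: pressure-active-motion} (with $V$ replaced by the regularised potential $\tilde V = V_\delta$) by integrating it over the torus. I would first integrate the equation in $x$: since on $\T^d$ we have $\int \Delta p_\gamma\dx x = 0$ by periodicity, and since integration by parts yields $\int_{\T^d} p_\gamma \Delta(p_\gamma+\tilde V)\dx x = -\int_{\T^d} \nabla p_\gamma\cdot\nabla(p_\gamma+\tilde V)\dx x$, combining the two terms coming from the advection/diffusion part of the equation should give
\begin{equation*}
\ddt \int_{\T^d} p_\gamma \dx x + (\gamma-1)\int_{\T^d} |\nabla p_\gamma|^2 \dx x + \nu\,\frac{\gamma}{\gamma-1}\int_{\T^d} \frac{|\nabla p_\gamma|^2}{p_\gamma}\dx x = -(\gamma-1)\int_{\T^d} \nabla p_\gamma\cdot\nabla \tilde V\dx x.
\end{equation*}

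Next, I would simply discard the nonnegative term $\nu\,\gamma(\gamma-1)^{-1}\int |\nabla p_\gamma|^2/p_\gamma$ on the left, and apply weighted Young's inequality to the drift term on the right to absorb half of $(\gamma-1)\|\nabla p_\gamma\|_{L^2}^2$ into the left-hand side. Integrating in time and dividing by $(\gamma-1)/2$ should then produce
\begin{equation*}
\int_0^T\!\!\int_{\T^d} |\nabla p_\gamma|^2\dx x\dx t \leq \frac{2}{\gamma-1}\int_{\T^d} p_\gamma(0)\dx x + \int_0^T\!\!\int_{\T^d} |\nabla\tilde V|^2\dx x\dx t.
\end{equation*}
The drift contribution is uniformly finite since by construction $\nabla V_\delta\to\nabla V$ in $L^4_{t,x}$ and the torus has finite measure.

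The delicate point, which I expect to be the main technical subtlety, is the control of $\int_{\T^d} p_\gamma(0)\dx x = \int_{\T^d} n_0^\gamma\dx x$ uniformly in $\gamma$. Since the entire analysis is motivated by the incompressible limit $\gamma\to\infty$, the initial datum must be \emph{well-prepared}, namely $0\leq n_0\leq 1$; otherwise, the limit constraint $n_\infty\leq 1$ would fail at $t=0$ and the uniform bound $\|p_\gamma\|_{L^\infty_{x,t}}\leq C_*$ from Lemma~\ref{lemma Cs} would be lost as well. Under this standard convention, $n_0^\gamma\leq n_0$ pointwise, so that $\int p_\gamma(0)\leq \|n_0\|_{L^1}$ uniformly, and the prefactor $2/(\gamma-1)$ in fact makes this contribution vanish as $\gamma\to\infty$. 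This gives the claimed uniform $L^2_{t,x}$-bound on $\nabla p_\gamma$, which, combined with the $L^\infty$-bound of Lemma~\ref{lemma Cs}, also provides the weak $L^2$-compactness needed to identify the limit pressure in Section~\ref{sec: convergence}.
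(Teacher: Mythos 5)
Your proposal matches the paper's proof essentially line for line: integrate the pressure equation \eqref{eq: pressure-active-motion} over $\T^d$, drop the nonnegative $\nu\,|\nabla p_\gamma|^2/p_\gamma$ term (the $\nu\Delta p_\gamma$ term vanishes by periodicity), apply Young's inequality to the drift term, integrate in time, and divide by $(\gamma-1)/2$. The only difference is your digression on well-preparedness, which is not really a subtlety here: the paper's standing assumption \eqref{eq: assump id diffusion} already gives $p_0 \in L^1$ uniformly in $\gamma$, and as you yourself note, the $2/(\gamma-1)$ prefactor would make even a bound growing like $O(\gamma)$ on $\int p_\gamma(0)$ sufficient.
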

\begin{proof}
   Let us integrate the equation for $p_\gamma$ \eqref{eq: pressure-active-motion} to obtain
   \begin{align*}
       \ddt \int_{\T^d} p_\gamma \dx x &\leq (1-\gamma) \int_{\T^d} \nabla p_\gamma \cdot (\nabla p_\gamma +\nabla \tilde V) \dx x,
   \end{align*}
and thus, by Young's inequality
 \begin{align*}
     \int_{\T^d} p_\gamma(T) \dx x +\frac{\gamma-1}{2} \int_0^T\!\!\int_{\T^d} |\nabla p_\gamma|^2\dx x \dx t \leq  \int_{\T^d} p_\gamma(0) \dx x
 +\frac{\gamma-1}{2} \int_0^T\!\!\int_{\T^d} |\nabla \tilde V|^2 \dx x \dx t,    \end{align*}
 and the proof is complete.
\end{proof}

\begin{lemma}[Strong compactness of $n_\gamma$.]
\label{strong n}
For $1\leq q<\infty$, as $\gamma\to \infty$ we have
$$
    n_\gamma \to n_\infty, \textit{ strongly in } L^q(0,T;L^q(\T^d)),
$$
up to a subsequence. Moreover,
$n_\gamma \to n_\infty$ strongly in $C(0,T;L^2(\T^d))$.
\end{lemma}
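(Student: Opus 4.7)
The plan is to apply the Aubin--Lions--Simon compactness theorem to $\{n_\gamma\}$. The crucial point, and the reason why compactness works only in the non-degenerate regime $\nu>0$, is that one obtains a spatial bound on $n_\gamma$ itself (not merely on $\Sigma_\gamma$). Since $\sigma'(n) = \nu + \gamma n^\gamma$, one has $\nabla \Sigma_\gamma = (\nu + \gamma p_\gamma)\nabla n_\gamma$, whence the pointwise domination
\[ |\nabla n_\gamma| \leq \frac{1}{\nu}\,|\nabla \Sigma_\gamma|. \]
The energy identity used to establish Lemma~\ref{lemma delta sigma} in fact controls $\tfrac12\int_{\T^d}|\nabla \Sigma_\gamma(T)|^2\dx x$ by a constant independent of $T$ and of $\gamma$, so $\nabla \Sigma_\gamma$ is uniformly bounded in $L^\infty(0,T;L^2(\T^d))$. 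Combining with the bound $\|n_\gamma\|_{L^\infty_{x,t}}\leq C_*^{1/\gamma}$ from Lemma~\ref{lemma Cs}, we obtain a uniform bound on $n_\gamma$ in $L^\infty(0,T;H^1(\T^d))$.

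For the time regularity I would read off the equation
\[ \partial_t n_\gamma = \Delta \Sigma_\gamma + \nabla n_\gamma \cdot \nabla \tilde V + n_\gamma \Delta \tilde V. \]
Lemma~\ref{lemma delta sigma}, together with $\Delta \tilde V\in L^2(0,T;L^2(\T^d))$ and the uniform $L^\infty$ bound on $n_\gamma$, gives $\Delta \Sigma_\gamma$ uniformly bounded in $L^2(0,T;L^2(\T^d))$. The second term is uniformly in $L^\infty(0,T;L^2(\T^d))$ by the previous paragraph and the bound $\nabla \tilde V\in L^\infty(0,T;L^\infty(\T^d))$ recorded in \eqref{assum V: convergence}; the third is uniformly in $L^2(0,T;L^2(\T^d))$ since $n_\gamma\in L^\infty$ and $\Delta \tilde V \in L^2(0,T;L^2(\T^d))$. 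Summing, $\partial_t n_\gamma$ is uniformly bounded in $L^2(0,T;L^2(\T^d))$.

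Armed with $n_\gamma$ bounded in $L^\infty(0,T;H^1(\T^d))$ and $\partial_t n_\gamma$ bounded in $L^2(0,T;L^2(\T^d))$, the compact embedding $H^1(\T^d)\hookrightarrow L^2(\T^d)$ combined with the Aubin--Lions--Simon theorem yields relative compactness of $\{n_\gamma\}$ in $C([0,T];L^2(\T^d))$. Extracting a subsequence produces $n_\infty$ such that $n_\gamma \to n_\infty$ strongly in $C([0,T];L^2(\T^d))$, which gives the second assertion. The $L^q(L^q)$ convergence for $2\leq q<\infty$ then follows from the interpolation
\[ \|n_\gamma - n_\infty\|_{L^q(0,T;L^q(\T^d))}^q \leq \|n_\gamma - n_\infty\|_{L^\infty_{x,t}}^{q-2}\,\|n_\gamma - n_\infty\|_{L^2(0,T;L^2(\T^d))}^2, \]
together with the uniform $L^\infty$ bound; the range $1\leq q<2$ is handled by H\"older's inequality on the bounded set $\T^d\times(0,T)$. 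I do not anticipate any serious obstacle: the only subtle point is upgrading the a priori estimate of Lemma~\ref{lemma delta sigma} from $L^2_t L^2_x$ to $L^\infty_t L^2_x$ for $\nabla \Sigma_\gamma$, which however follows directly by inspecting the boundary term $\tfrac12\int_{\T^d}|\nabla \Sigma_\gamma(T)|^2\dx x$ already present on the left-hand side of that estimate.
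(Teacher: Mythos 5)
Your proof is correct and follows essentially the same route as the paper: establish $n_\gamma$ bounded in $L^\infty(0,T;H^1(\T^d))$, establish $\partial_t n_\gamma$ bounded in $L^2(0,T;L^2(\T^d))$ from the equation, then apply Aubin--Lions--Simon and interpolate. The only cosmetic difference is that the paper bounds $\nabla n_\gamma$ via the decomposition $\nu\nabla n_\gamma=\nabla\Sigma_\gamma-n_\gamma\nabla p_\gamma$ (invoking Lemma~\ref{lemma nabla p}), whereas your direct pointwise bound $|\nabla n_\gamma|=|\nabla\Sigma_\gamma|/(\nu+\gamma p_\gamma)\leq|\nabla\Sigma_\gamma|/\nu$ is a touch cleaner and dispenses with the $\nabla p_\gamma$ estimate.
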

\begin{proof}
From the proofs of Lemmata~\ref{lemma nabla sigma}-\ref{lemma delta sigma}-\ref{lemma nabla p}, we have  
\begin{equation*}
        \nabla{n_\gamma} = \frac 1 \nu \left( \nabla \Sigma_\gamma - n_\gamma\nabla p_\gamma\right) \in L^\infty(0,T; L^2(\T^d)),
    \end{equation*}
    and
    \begin{equation*}
        \partialt{n_\gamma} = \Delta \Sigma_\gamma + \nabla\cdot(n_\gamma \nabla \tilde V) \in L^2(0,T;L^2(\T^d)),
    \end{equation*}
uniformly in $\gamma$. Therefore, by Aubin-Lions' lemma, we conclude.
\end{proof}

\bigskip
    
The strong compactness of $n_\gamma$ given by Lemma~\ref{strong n} and the weak compactness of $\nabla \Sigma_\gamma$ implied by Lemma~\ref{lemma nabla sigma} are enough to pass to the limit in \eqref{eq: new} and show that $n_\infty$ and $\Sigma_\infty$ satisfy
  \begin{equation*}
        \partialt{n_\infty} =\Delta \Sigma_\infty + \nabla\cdot(n_\infty \nabla\tilde V)
    \end{equation*}
Exploiting the energy structure of the limit equation we show that the weak convergence of $\nabla \Sigma_\gamma$ can be uplifted to strong convergence.

\begin{lemma}[Strong convergence of $\nabla \Sigma_\gamma$]
As $\gamma\to\infty$, we have
\begin{equation*}
    \nabla \Sigma_\gamma \to \nabla \Sigma_\infty, \text{ strongly in } L^2(0,T; L^2(\T^d)).
\end{equation*}
\end{lemma}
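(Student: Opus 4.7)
The strategy is to upgrade the weak convergence $\nabla \Sigma_\gamma \rightharpoonup \nabla \Sigma_\infty$ (from Lemma~\ref{lemma nabla sigma}) to strong convergence via the standard identity
\[
\|\nabla(\Sigma_\gamma - \Sigma_\infty)\|_{L^2(0,T;L^2(\T^d))}^2 = \|\nabla \Sigma_\gamma\|_{L^2}^2 - 2 \int_0^T\!\!\int_{\T^d} \nabla \Sigma_\gamma \cdot \nabla \Sigma_\infty \dx x \dx t + \|\nabla \Sigma_\infty\|_{L^2}^2.
\]
The middle term already converges to $2 \|\nabla \Sigma_\infty\|_{L^2}^2$ by weak convergence, so the task reduces to proving convergence of the energies $\|\nabla \Sigma_\gamma\|_{L^2}^2 \to \|\nabla \Sigma_\infty\|_{L^2}^2$.

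The first step is to pass to the limit in the energy identity from the proof of Lemma~\ref{lemma nabla sigma}: testing \eqref{eq: main active motion NEW} against $\Sigma_\gamma$ and integrating yields
\[
\int_{\T^d} F_\gamma(n_\gamma(T)) \dx x + \int_0^T\!\!\int_{\T^d} |\nabla \Sigma_\gamma|^2 \dx x \dx t = \int_{\T^d} F_\gamma(n_0) \dx x - \int_0^T\!\!\int_{\T^d} n_\gamma \nabla \tilde V \cdot \nabla \Sigma_\gamma \dx x \dx t,
\]
where $F_\gamma(n) := \tfrac{\nu}{2} n^2 + \tfrac{\gamma}{(\gamma+1)(\gamma+2)} n^{\gamma+2}$ is the primitive of $\sigma_\gamma$. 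I would then invoke three ingredients: the strong convergence $n_\gamma \to n_\infty$ in $C([0,T]; L^2(\T^d))$ from Lemma~\ref{strong n}, the uniform bound $\|n_\gamma\|_{L^\infty} \leq C_*^{1/\gamma}$, and the fact that the prefactor $\tfrac{\gamma}{(\gamma+1)(\gamma+2)}$ is of order $1/\gamma$. Together these give $F_\gamma(n_\gamma(T)) \to \tfrac{\nu}{2} n_\infty^2(T)$ and $F_\gamma(n_0) \to \tfrac{\nu}{2} n_0^2$ in $L^1(\T^d)$ (using $n_0 \leq 1$, natural for the incompressible limit), since the power term is dominated by a constant multiple of $(\gamma + 2)^{-1}$. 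The drift integral passes to the limit by strong-times-weak convergence, as $n_\gamma \nabla \tilde V \to n_\infty \nabla \tilde V$ strongly in $L^2$.

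To identify the resulting limit with $\|\nabla \Sigma_\infty\|_{L^2}^2$, I would test the limit equation $\partial_t n_\infty = \Delta \Sigma_\infty + \nabla \cdot (n_\infty \nabla \tilde V)$ against $\Sigma_\infty \in L^2(0,T; H^1(\T^d))$; the duality pairing is meaningful because $\partial_t n_\infty \in L^2(0,T; H^{-1}(\T^d))$. Decomposing $\Sigma_\infty = \nu n_\infty + p_\infty$, the linear part contributes $\tfrac{\nu}{2} \int (n_\infty^2(T) - n_0^2) \dx x$, while the pairing of $\partial_t n_\infty$ with $p_\infty$ vanishes thanks to the complementarity relation $p_\infty(1 - n_\infty) = 0$. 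This matches exactly the limit of the $\Sigma_\gamma$-identity, yielding the desired energy convergence.

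\textbf{Main obstacle.} The delicate point is the chain-rule identity $\int_0^T \langle \partial_t n_\infty, p_\infty \rangle \dx t = 0$: formally $p_\infty$ is supported on the saturated set $\{n_\infty = 1\}$ where $n_\infty$ is time-constant, but a rigorous justification in the duality between $L^2(H^1)$ and $L^2(H^{-1})$ requires care. A clean alternative is to avoid the chain rule entirely and use only one-sided bounds: weak lower semicontinuity of the $L^2$-norm gives $\liminf \|\nabla \Sigma_\gamma\|_{L^2}^2 \geq \|\nabla \Sigma_\infty\|_{L^2}^2$, and combining this with Ioffe's theorem applied to the convex functionals $F_\gamma$ (which Mosco-converge to $F_\infty(n) = \tfrac{\nu}{2} n^2 + \chi_{[0,1]}(n)$) produces the matching reverse inequality through the already-established passage to the limit in the energy identity, bypassing any explicit manipulation of $p_\infty \partial_t n_\infty$.
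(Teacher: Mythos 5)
The gap sits exactly where you flag it, and the proposed workaround does not close it. Passing to the limit in your $\gamma$-level identity gives $\lim_\gamma\|\nabla\Sigma_\gamma\|_{L^2}^2=\tfrac\nu2\|n_0\|_{L^2}^2-\tfrac\nu2\|n_\infty(T)\|_{L^2}^2-\int_0^T\!\!\int_{\T^d} n_\infty\nabla \tilde V\cdot\nabla\Sigma_\infty\dx x\dx t=:L$ (note that Ioffe/Mosco is not even needed for this: $n_\gamma(T)\to n_\infty(T)$ strongly in $L^2$ and the power part of $F_\gamma$ is $O(1/\gamma)$ thanks to the $L^\infty$ bound of Lemma~\ref{lemma Cs}, so $\int F_\gamma(n_\gamma(T))$ actually converges, and lower semicontinuity adds nothing). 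To conclude you must show $L\le\|\nabla\Sigma_\infty\|_{L^2}^2$, and the only link between the quantities in $L$ and $\|\nabla\Sigma_\infty\|_{L^2}^2$ is the limit equation: testing it with $\Sigma_\infty=\nu n_\infty+p_\infty$ and using the (unproblematic) chain rule for the quadratic part yields $\|\nabla\Sigma_\infty\|_{L^2}^2=L-\int_0^T\langle\partial_t n_\infty,p_\infty\rangle\dx t$. Hence the reverse inequality you need is \emph{literally} the statement $\int_0^T\langle\partial_t n_\infty,p_\infty\rangle\dx t\le 0$. Ioffe's theorem and the Mosco convergence $F_\gamma\to F_\infty$ concern lower semicontinuity of the integral functionals along the $\gamma$-sequence; they cannot produce a chain rule or an energy identity/inequality for the limit pair $(n_\infty,p_\infty)$, so the ``clean alternative'' still requires exactly the manipulation of $p_\infty\,\partial_t n_\infty$ that you set out to bypass. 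Your strategy is rescuable — the one-sided bound suffices and can be obtained by forward Steklov differences, since $p_\infty(t)\bigl(n_\infty(t+h)-n_\infty(t)\bigr)=p_\infty(t)\bigl(n_\infty(t+h)-1\bigr)\le0$ by $p_\infty n_\infty=p_\infty$ and $n_\infty\le1$, with $\partial_t n_\infty\in L^2_{x,t}$ justifying the passage $h\to0$ — but as written this key step is missing, and the argument does not close.

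For comparison, the paper avoids the issue entirely by never integrating the time pairing by parts: it keeps the two identities $\int_0^T\!\!\int_{\T^d}\Sigma_\gamma\partial_t n_\gamma=-\|\nabla\Sigma_\gamma\|_{L^2}^2-\int_0^T\!\!\int_{\T^d} n_\gamma\nabla\tilde V\cdot\nabla\Sigma_\gamma$ and its analogue for $(n_\infty,\Sigma_\infty)$, and passes to the limit in $\int_0^T\!\!\int_{\T^d}\Sigma_\gamma\partial_t n_\gamma$ directly, pairing the strong compactness of $\Sigma_\gamma$ (and of $n_\gamma$) with the weak $L^2_{x,t}$ convergence of $\partial_t n_\gamma$ (and of $\nabla\Sigma_\gamma$). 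In this way the terminal-time energies and the complementarity relation never enter the proof of this lemma, whereas your route forces them in through the chain rule in time.
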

\begin{proof}
Testing the equations 
\begin{equation*}
        \partialt{n_\gamma} =\Delta \Sigma_\gamma + \nabla\cdot(n_\gamma \nabla\tilde V),
    \end{equation*}
     \begin{equation*}
        \partialt{n_\infty} =\Delta \Sigma_\infty + \nabla\cdot(n_\infty \nabla\tilde V),
    \end{equation*}
by $\Sigma_\gamma$ and $\Sigma_\infty$, respectively, we have
    \begin{align*}
          \int_0^T \!\!\int_{\T^d}\Sigma_\gamma \partialt{n_\gamma} \dx x= -  \int_0^T \!\!\int_{\T^d} |\nabla \Sigma_\gamma|^2 \dx x -  \int_0^T \!\! \int_{\T^d}\nabla \Sigma_\gamma \cdot \nabla \tilde V n_\gamma \dx x,
    \end{align*}
        \begin{align*}
         \int_0^T \!\! \int_{\T^d}\Sigma_\infty \partialt{n_\infty} \dx x= -  \int_0^T \!\!\int_{\T^d} |\nabla \Sigma_\infty|^2 \dx x -   \int_0^T \!\!\int_{\T^d}\nabla \Sigma_\infty \cdot \nabla \tilde V n_\infty \dx x.
    \end{align*}
 Our goal is to show
 \begin{equation*}
     \lim_{\gamma\to \infty}\|\nabla\Sigma_\gamma\|_{L^2}= \|\nabla\Sigma_\infty\|_{L^2}.
 \end{equation*}
 From the two energy equalities, we have
     \begin{align*}
    \int_0^T \!\!\int_{\T^d} |\nabla \Sigma_\gamma|^2 \dx x \dx t=& \int_0^T \!\!\int_{\T^d}\Sigma_\infty \partialt{n_\infty} \dx x \dx t -  \int_0^T \!\! \int_{\T^d}\Sigma_\gamma \partialt{n_\gamma} \dx x \dx t +\int_0^T \!\! \int_{\T^d} |\nabla \Sigma_\infty|^2 \dx x \dx t\\[0.3em]
     &\qquad+ \int_0^T \!\!\int_{\T^d}\nabla \Sigma_\infty \cdot \nabla \tilde V n_\infty \dx x \dx t - \int_0^T \!\! \int_{\T^d}\nabla \Sigma_\gamma \cdot \nabla \tilde V n_\gamma \dx x \dx t\\[0.5em]
     \to& \int_0^T \!\!\int_{\T^d} |\nabla \Sigma_\infty|^2 \dx x \dx t,
    \end{align*}
because the two differences on the right-hand side converge to zero as $\gamma\to\infty$ by the strong compactness of $\Sigma_\gamma$ (and $n_\gamma$) and the weak compactness of $\partial_t n_\gamma$ (and $\nabla\Sigma_\gamma$), 
\end{proof}

The above argument concludes the proof of Theorem~\ref{thm:convergence}.

\subsection{Proof of Theorem~\ref{thm: L4 diff infty} -- results for the limit pressure}
\label{sec: active motion infinity}
We now prove that $\Delta \Sigma_\infty$ is bounded in $L^2(0,T; L^2(\R^d))$ and $\Sigma_\infty \in L^\infty(0,T;L^\infty(\R^d))$ -- which implies $\Sigma_\infty \in L^4(0,T;W^{1,4}(\R^d))$ -- under relaxed assumptions.
\begin{proof}[Proof of Theorem~\ref{thm: L4 diff infty}]
Finally, we may come back to \eqref{eq:dissipgradsigmarevisited2}, where we consider the equation with the regularised potential, $\tilde V$. Using the fact that $|\Delta \Sigma_\gamma + n_\gamma \Delta V|^2 = |\Delta \Sigma_\gamma|^2 + 2 \Delta\Sigma_\gamma n_\gamma \Delta \tilde V + n_\gamma^2 |\Delta \tilde V|^2$, we can revisit the inequality as follows
\begin{align*}
    \begin{split}
    \frac12 
    &\int |\nabla \Sigma_\gamma|^2(T) \dx x + \frac\nu8 \int_0^T\!\!\int_{\T^d} |\Delta \Sigma_\gamma|^2 \dx x \dx t \\[0.5em]
    &\leq  \frac\nu4 C_*^{\frac 2\gamma} \norm{\Delta \tilde V}_{L^2}^2+
    \frac1\nu C_*^{\frac 2\gamma} \norm{\partial_t \tilde V}_{L^2_{x,t}}^2 + C_*^{\frac 2 \gamma} \norm{\Delta \tilde V}_{L^2_{x,t}} \norm{\partial_t \tilde V}_{L^2_{x,t}} + C\\[0.3em]
    &\qquad+ \frac1\nu \int_0^T\!\!\int_{\T^d} |\nabla \Sigma_\gamma|^2 |\partial_t \tilde V| \dx x \dx t + \frac 1\nu \int_0^T\!\!\int_{\T^d} |\nabla \Sigma_\gamma \cdot \nabla \tilde V|^2 \dx x \dx t .
    \end{split}
\end{align*}
Letting $\gamma \to \infty$, and using lower semi-continuity of convex functionals, we get
\begin{equation}\label{estimate for limit}
    \begin{split}
    \frac12 
    &\int |\nabla \Sigma_\infty|^2(T) \dx x + \frac\nu8 \int_0^T\!\!\int_{\T^d} |\Delta \Sigma_\infty |^2 \dx x \dx t\\[0.5em]
    &\leq \frac\nu4 \norm{\Delta \tilde V}_{L^2}^2+
    \frac1\nu \norm{\partial_t \tilde V}_{L^2}^2 + \norm{\Delta \tilde V}_{L^2} \norm{\partial_t \tilde V}_{L^2} + C\\[0.3em]
    &\qquad+ \frac1\nu \int_0^T\!\!\int_{\T^d} |\nabla \Sigma_\infty|^2 |\partial_t \tilde V| \dx x \dx t + \frac 1\nu \int_0^T\!\!\int_{\T^d} |\nabla \Sigma_\infty \cdot \nabla \tilde V|^2 \dx x \dx t.
    \end{split}
\end{equation}

\begin{lemma}[$p_\infty, \Sigma_\infty \in L^\infty(0,T;L^\infty(\T^d))$]
The solution $(n_\infty,p_\infty)$ to \eqref{eq: limit} with $V=\tilde V$, satisfies
\begin{equation*}
    \|p_\infty\|_{L^\infty(0,T;L^\infty(\T^d))}\leq 2\|\tilde V\|_{L^\infty(0,T;L^\infty(\T^d))}. 
\end{equation*}
Therefore, we also have $\Sigma_\infty \in L^\infty(0,T;L^\infty(\T^d))$.
\end{lemma}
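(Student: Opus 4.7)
The plan follows a Stampacchia-style test function argument implementing the maximum principle heuristic that on the saturated region $\{p_\infty > 0\}$ the constraint $p_\infty(1 - n_\infty) = 0$ forces \eqref{eq: limit} to degenerate: both $\partial_t n_\infty$ and $\nabla n_\infty$ vanish there, so the equation collapses to $\Delta(p_\infty + \tilde V) = 0$ in the spatial variable. The maximum principle should then bound $w := p_\infty + \tilde V$ by its boundary value $\tilde V \leq M := \|\tilde V\|_{L^\infty}$, whence $p_\infty \leq M - \tilde V \leq 2M$.

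To implement this rigorously, I set $\psi := (p_\infty + \tilde V - M)_+$. Since $\tilde V \leq M$, we have $\{\psi > 0\} \subset \{p_\infty > 0\}$, so the constraint yields $n_\infty = 1$ a.e. on $\{\psi > 0\}$. Using the $L^2_t H^2_x$ regularity of $\Sigma_\infty$ together with the $L^2_t H^1_x$ regularity of $n_\infty$ and $p_\infty$ inherited from Lemmata~\ref{lemma nabla sigma}--\ref{strong n}, Stampacchia's lemma gives $\nabla n_\infty = 0$ a.e.\ on $\{\psi > 0\}$, hence $\nabla \Sigma_\infty = \nabla p_\infty$ there. Testing the weak form of \eqref{eq: limit} against $\psi$ and integrating by parts, the elliptic terms reduce, upon noting that $\nabla \psi = \nabla p_\infty + \nabla \tilde V$ and $n_\infty = 1$ on $\{\psi > 0\}$, to the clean identity
$$\int_0^T\!\!\int_{\T^d} \nabla \Sigma_\infty \cdot \nabla \psi \,dx\,dt + \int_0^T\!\!\int_{\T^d} n_\infty \nabla \tilde V \cdot \nabla \psi \,dx\,dt = \int_0^T\!\!\int_{\T^d} |\nabla \psi|^2 \,dx\,dt.$$

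For the time term, I would exploit the pointwise identity $n_\infty \psi = \psi$ (equivalent to $(1-n_\infty)\psi = 0$) to rewrite $\partial_t n_\infty \cdot \psi = \tfrac{1}{2}\partial_t(n_\infty^2)\psi$ and integrate by parts in time; the further identities $n_\infty^2 \psi = \psi$ and $n_\infty^2 \partial_t \psi = \partial_t \psi$ (the latter holds a.e.\ since $\partial_t \psi$ vanishes on $\{\psi = 0\}$ by the Sobolev chain rule) cause the boundary and volume contributions to cancel exactly, yielding $\int_0^T\!\!\int_{\T^d} \partial_t n_\infty \cdot \psi \,dx\,dt = 0$. Combining everything forces $\int_0^T\!\!\int_{\T^d} |\nabla \psi|^2 \,dx\,dt = 0$, so $\psi(\cdot, t)$ is spatially constant for a.e.~$t$. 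A positive constant $\psi(\cdot, t) \equiv c(t) > 0$ would imply $p_\infty > 0$ on all of $\T^d$, hence $n_\infty \equiv 1$, contradicting the mass conservation $\int_{\T^d} n_\infty \,dx = \int_{\T^d} n_0 \,dx < |\T^d|$ inherited from the conservative form of the equation (the degenerate case $n_0 \equiv 1$ being trivially treated separately). Thus $\psi \equiv 0$, giving $p_\infty \leq 2M$; the bound on $\Sigma_\infty = \nu n_\infty + p_\infty$ follows immediately from $n_\infty \leq 1$.

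The main technical obstacle I anticipate is the rigorous justification of the distributional chain rules underlying the manipulation of $\partial_t n_\infty \cdot \psi$, since $\partial_t n_\infty$ is only available in a dual space (typically $L^2_t H^{-1}_x$) while $\psi$ has limited time regularity; this should follow from a standard density argument exploiting the strong $C_t L^2_x$ convergence of $n_\gamma$ from Lemma~\ref{strong n} and the duality pairing, together with an approximation of $\psi$ by smoother truncations $\psi_\varepsilon$ for which the chain rule is evident.
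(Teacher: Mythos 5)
Your argument is conceptually different from the paper's and genuinely elegant at a heuristic level, but it contains a gap that I do not see how to close cheaply. The paper proves this lemma by working \emph{at the $\gamma$-level}, where $p_\gamma$ is a classical solution and the comparison principle is available: it resets the initial time to an arbitrary $t_0>0$, chooses approximating initial data $n_\gamma(t_0)\to n_\infty(t_0)$ uniformly with $n_\gamma(t_0)\le 1-\gamma^{-1/2}$ so that $p_\gamma(t_0)\to 0$, invokes the uniqueness result of \cite{DM2016} to identify the resulting limit with $n_\infty$, applies the $L^\infty$ estimate from the proof of Lemma~\ref{lemma Cs} on $[t_0,t]$, and then sends $t\downarrow t_0$ through Lebesgue points. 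This is what makes the bound ``instantaneous'' — it uses only $\|\tilde V\|_{L^\infty}$ — and it only ever manipulates smooth quantities. You instead work directly at the limit level with the truncation $\psi=(p_\infty+\tilde V-M)_+$, which is a fundamentally different decomposition of the problem; if it worked it would also be cleaner in that it would not require the uniqueness theorem.

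The gap is the time integration by parts. Your chain of identities $\partial_t n_\infty\,\psi = \tfrac12\partial_t(n_\infty^2)\psi$, followed by $\int_0^T\!\int\tfrac12\partial_t(n_\infty^2)\psi = \tfrac12[\int n_\infty^2\psi]_0^T-\tfrac12\int_0^T\!\int n_\infty^2\,\partial_t\psi$ and the substitutions $n_\infty^2\psi=\psi$, $n_\infty^2\,\partial_t\psi=\partial_t\psi$, all presuppose that $\partial_t\psi$ — hence $\partial_t p_\infty$ — exists in a function space dual to $L^2_{x,t}$. But no time regularity of $p_\infty$ has been established. The equation controls $\partial_t n_\infty\in L^2_{x,t}$ (since $\Delta\Sigma_\infty\in L^2_{x,t}$), but this does not propagate to $\partial_t p_\infty$: the relation $p_\infty=(\Sigma_\infty-\nu)_+$ would require control of $\partial_t\Sigma_\infty$, and at the $\gamma$-level one has $\partial_t\Sigma_\gamma=(\nu+\gamma p_\gamma)\partial_t n_\gamma$, whose $L^2$ norm is not uniform in $\gamma$ on $\{p_\gamma>0\}$. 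The ``standard density argument'' you gesture at would need precisely this missing quantitative information, and the Sobolev chain rule giving $\partial_t\psi=0$ a.e.\ on $\{\psi=0\}$ is not available without it. A secondary, lesser issue is that ruling out a nonzero spatial constant $\psi\equiv c(t)>0$ via mass conservation silently assumes $\int_{\T^d}n_0\,dx<|\T^d|$; the paper's argument has no such restriction. Unless you can establish, say, $p_\infty\in H^1_t L^2_x$ or a BV-in-time bound by an independent mechanism, I would recommend the paper's route of transferring the comparison-principle bound from finite $\gamma$ and using Lebesgue points in time, which sidesteps all time-regularity questions about the limit.
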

\begin{proof}
Let $(n_\infty,p_\infty)$ be the unique solution to \eqref{eq: limit}, with  $\Sigma_\infty=\nu n_\infty + p_\infty$ --- for the uniqueness result we refer the reader to \cite{DM2016}.
Let $t_0>0$ and $\epsilon>0$.
We choose a sequence $n_\gamma$ of solutions to \eqref{eq: main-active-motion} for $t\geq t_0$, such that the initial data satisfy $n_\gamma(\cdot,t_0)\leq 1-\frac{1}{\sqrt \gamma}$ for $\gamma$ large enough and $n_\gamma(t_0)$ converges uniformly to $n_\infty(t_0)$ on $\T^d$. Hence, $p_\gamma(t_0)\leq (1-1/\sqrt{\gamma})^\gamma\to 0$ as $\gamma\to \infty$. From the proof of Lemma~\ref{lemma Cs} we know that
\begin{align*}
    \max_x p_\gamma(t) - \max_x p_\gamma(t_0) \leq \max_x \tilde V(t_0)- \min_{x,t} \tilde V + \int_{t_0}^{t}\norm{\nu \Delta \tilde V+\partial_t  \tilde V}_{L^\infty(\T^d)} \dx s.
\end{align*}
Passing to the limit $\gamma\to \infty$, we find
\begin{align*}
 \max_x p_\infty(t) &\leq \max_x \tilde V(t_0)- \min_{x,t} \tilde V + \int_{t_0}^{t}\norm{\nu \Delta \tilde V+\partial_t  \tilde V}_{L^\infty(\T^d)} \dx s\\
 &\leq 2\|\tilde V\|_{L^\infty} +\int_{t_0}^{t}\norm{\nu \Delta \tilde V+\partial_t  \tilde V}_{L^\infty(\T^d)} \dx s.
\end{align*}
Now we aim at choosing $t=t_0+\epsilon$ and passing to $\epsilon\to 0$. Let $(x_0,t_0)$ be a Lebesgue point for $p_\infty$. 
Since the above inequality holds for almost every $t_0\leq t\leq t_0+\epsilon$, we have
\begin{equation*}
\fint_{t_0}^{t_0+\epsilon}\fint_{B(x_0,\epsilon)} p_\infty(x,s) \dx x \dx s \leq  2\|\tilde V\|_{L^\infty} +\int_{t_0}^{t_0+\epsilon}\norm{\nu \Delta \tilde V+\partial_t  \tilde V}_{L^\infty(\T^d)} \dx s,
\end{equation*}
which, as $\epsilon\to 0$, becomes
\begin{equation*}
p_\infty(x_0, t_0)\leq  2\|\tilde V\|_{L^\infty}.
\end{equation*}
This holds for almost every $(x_0,t_0)\in \T^d\times(0,T)$ and thus concludes the proof.
\end{proof}

\begin{remark}
    Let us note that, in the same fashion, also for $\nu=0$ it would be possible to obtain a uniform bound on $p_\infty$ depending only on the $L^\infty$-norm of the potential. However, we are not able to exploit it in the same way as for the case $\nu>0$. In fact, to obtain \eqref{estimate for limit} we pass to the limit $\gamma\to\infty$ and use the lower semi-continuity of the $L^2$-norm of $\Delta \Sigma_\gamma$, which is weakly compact. In the porous medium case without active motion, $\nu = 0$, the same does not hold for the quantity $\|p_\gamma D^2 p_\gamma\|_{L^2}$.
\end{remark}

\smallskip

The above lemma shows that the $L^\infty$-bound of $\Sigma_\infty$ no longer depends on the norm $\|\nu \Delta \tilde V +\partial_t \tilde V\|_{L^1(0,T;L^\infty(\T^d))}$, but only on the $L^\infty$ norm of the potential. A similar estimate, where the same ``instantaneous'' dependence of $p$ in terms of $V$ is shown, was already present in~\cite{LavSanPressure}.

Thanks to Young's inequality, from \eqref{estimate for limit} we compute
\begin{equation*}
    \begin{split}
    \frac12 
    &\int |\nabla \Sigma_\infty|^2(T) \dx x + \frac\nu8 \int_0^T\!\!\int_{\T^d} |\Delta \Sigma_\infty |^2 \dx x \dx t\\[0.5em]
    &\leq \frac\nu4 \norm{\Delta \tilde V}_{L^2}^2+
    \frac1\nu \norm{\partial_t \tilde V}_{L^2}^2 + \norm{\Delta \tilde V}_{L^2} \norm{\partial_t \tilde V}_{L^2} + C\\[0.3em]
    &\qquad+ \varepsilon \int_0^T\!\!\int_{\T^d} |\nabla \Sigma_\infty|^4 \dx x \dx t + C \|\partial_t \tilde V\|^2_{L^2} + \|\nabla \tilde V\|_{L^4}^4\\[0.3em]
    &\leq C +\varepsilon \int_0^T\!\!\int_{\T^d} |\nabla \Sigma_\infty|^4 \dx x \dx t,
    \end{split}
\end{equation*}
where $C$ only depends on the norms $ \norm{\partial_t \tilde V}_{L^2}, \|\Delta \tilde V\|_{L^2}, \|\nabla \tilde V\|_{L^4}$.
Using again that 
\begin{equation*}
    \int_{\T^d} |\nabla \Sigma_\infty|^4 \dx x \leq C \|\Sigma_\infty\|_\infty^2 \int_{\T^d} |\Delta \Sigma_\infty|^2 \dx x,
\end{equation*}
choosing $\varepsilon$ small enough and passing to the limit $\delta\to 0$  we conclude.
\end{proof}

\bigskip

We conclude by proving that also the gradient of the limit pressure, $p_\infty$, is bounded in $L^4$ under the same assumptions, Corollary~\ref{cor: L4 diff}.

\begin{proof}[Proof of Corollary~\ref{cor: L4 diff}]
Let us recall
\begin{equation*}
    \Sigma_\infty= p_\infty + \nu n_\infty \in L^2(0,T; H^2(\R^d)),
\end{equation*}
as well as
\begin{equation*}
    p_\infty(1-n_\infty)=0.
\end{equation*}
As a consequence, we have
\begin{equation*}
    p_\infty=(\Sigma_\infty- \nu)_+,
\end{equation*}
where $(\cdot)_+$ denotes the positive part. Hence, the regularity of $p_\infty$ is, for each $t$, that of the positive part of an $H^2$-function. Moreover, we obtain $p_\infty\in L^4(0,T; W^{1,4}(\R^d))$ under the same assumptions of Theorem~\ref{thm: L4 diff infty}.
\end{proof}

\section*{Acknowledgments}
This project was supported by the LABEX MILYON (ANR-10-LABX-0070) of Université de Lyon, within the program «Investissements d’Avenir» (ANR-11-IDEX-0007) operated by the French National Research Agency (ANR), and by the European Union via the ERC AdG 101054420 EYAWKAJKOS project.
The authors also acknowledge the support of the Lagrange Mathematics and Computation Research Center and the Oberwolfach Research Institute for Mathematics which hosted important discussions on this topic.

\bibliographystyle{abbrv}
\bibliography{literature}

\end{document}